\documentclass[11pt]{amsart}

\usepackage{xypic,amsmath,amssymb}

\setlength{\parindent}{0pt}
\newtheorem{proposition}{Proposition}[section]
\newtheorem{lemma}[proposition]{Lemma}
\newtheorem{corollary}[proposition]{Corollary}
\newtheorem{theorem}[proposition]{Theorem}

\theoremstyle{definition}

\theoremstyle{remark}

\newcommand{\thlabel}[1]{\label{th:#1}}
\newcommand{\thref}[1]{Theorem~\ref{th:#1}}
\newcommand{\selabel}[1]{\label{se:#1}}
\newcommand{\seref}[1]{Section~\ref{se:#1}}
\newcommand{\lelabel}[1]{\label{le:#1}}
\newcommand{\leref}[1]{Lemma~\ref{le:#1}}
\newcommand{\prlabel}[1]{\label{pr:#1}}
\newcommand{\prref}[1]{Proposition~\ref{pr:#1}}
\newcommand{\colabel}[1]{\label{co:#1}}
\newcommand{\coref}[1]{Corollary~\ref{co:#1}}

\newcommand{\eqlabel}[1]{\label{eq:#1}}
\newcommand{\equref}[1]{(\ref{eq:#1})}

\def\mapright#1{\smash{\mathop{\longrightarrow}\limits^{#1}}}

\def\doublerightbis#1#2{{\lower.2ex\vbox{
\hbox{${\smash{\mathop{\longrightarrow}\limits^{#1}}}$}\vspace*{-4mm}
\hbox{${\smash{\mathop{\longrightarrow}\limits_{#2}}}$}}}}

\def\equal#1{\smash{\mathop{=}\limits^{#1}}}

\newcommand{\Hom}{{\rm Hom}}
\newcommand{\HOM}{{\rm HOM}}
\newcommand{\Ext}{{\rm Ext}}
\newcommand{\EXT}{{\rm EXT}}
\newcommand{\Tor}{{\rm Tor}}

\def\lan{\langle}
\def\ran{\rangle}
\def\ot{\otimes}

\newcommand{\ev}{{\rm ev}}

\newcommand{\Cc}{\mathcal{C}}

\newcommand{\Ee}{\mathcal{E}}

\newcommand{\Mm}{\mathcal{M}}

\def\*C{{}^*\hspace*{-1pt}{\Cc}}

\def\text#1{{\rm {\rm #1}}}
\def\smashco{\mathrel>\joinrel\mathrel\triangleleft}

\allowdisplaybreaks[4]


\begin{document}
\title[Cohomology of comodules over smash coproducts]{On the cohomology of comodules over smash coproducts}
\author{S. Caenepeel}
\address{Faculty of Engineering,
Vrije Universiteit Brussel, VUB, Pleinlaan 2, B-1050 Brussels, Belgium}
\email{scaenepe@vub.ac.be}
\urladdr{http://homepages.vub.ac.be/\~{}scaenepe/}
\author{T. Gu\'ed\'enon}
\address{D\'epartement de Math\'ematiques, Universit\'e de Ziguinchor, BP 523, Ziguinchor, Senegal}
\email{thomas.guedenon@univ-zig.sn}

\subjclass[2010]{16S40,16W30}

\keywords{Comodule coalgebra, Hopf algebra, smash coproduct, Hopf comodule, spectral sequence}

\begin{abstract}
We consider the category of comodules over a smash coproduct coalgebra $C\smashco H$. We show that there is a Grothendieck spectral sequence 
connecting the derived functors of  the Hom functors coming from
$C\smashco H$-colinear, $H$-colinear and
rational $C$-colinear morphisms. We give several applications and connect our results
to existing spectral sequences in the literature.
\end{abstract}
\maketitle

\section*{Introduction}
The category of comodules over a coalgebra over a field is a Grothendieck category
with enough injectives,
so we can consider right derived functors of left exact functors from
the category of comodules to vector spaces. Examples of such functors include
colinear homomorphisms, rational homomorphisms and coinvariants. The aim of this
paper is to establish spectral sequences relating these derived functors. The classical
procedure to do this is due to Grothendieck \cite{Groth}, and allows to associate
a spectral sequence to a composition $G\circ F$ of two left exact functors, under the
assumption that $F$ preserves injective objects. This technique has been applied in
the literature in several situations: in \cite{Magid}, Magid considers an affine algebraic
group $H$ acting on a commutative noetherian algebra $R$ over an algebraically closed field,
and studies the cohomology of $R\cdot H$-modules; in \cite{Gued} the cohomology of
modules over a smash product algebra is studied; in \cite{CaenGued}, algebras with
a Hopf algebra coaction are considered, and then the cohomology of the associated
relative Hopf modules is investigated.\\
The setting of the present paper is as follows: take a Hopf algebra $H$ over a field,
and an $H$-comodule coalgebra $C$, that is a
coalgebra in the category of (right) $C$-comodules. Then we can consider the smash
coproduct coalgebra $C\smashco H$, and the right derived functors of
$\Hom^{C\smashco H}$. This functor is isomorphic to a composition of two
functors of type $\Hom^H$ and $\HOM^C$ (\prref{1.5}); the second functor preserves
injectives (\leref{2.2}), so that we can apply Grothendieck's results, leading to our
main result \thref{2.6}.\\
\seref{1} contains some preliminary results, mainly on comodules over the smash
coproduct. \seref{2} begins with properties of injective comodules, needed in order
to prove the main result \thref{2.6}. We then look at particular situations; for example,
the coinvariants functor $(-)^{{\rm co}H}$ is isomorpic to $\Hom^H(k,-)$, leading to
a spectral sequence involving the coinvariants functor, see \coref{2.7}. In the case
where $H$ is cosemisimple, the exact sequence collapses, leading to \coref{2.8}.
In \seref{3}, we present some applications. We present a new completely different
proof of a result in \cite{CaeDasRai} on the cosemisimplicity of the smash coproduct.
As another consequence, we obtain the Hochschild-Serre spectral sequence for smash coproducts of coalgebras and Hopf algebras, we refer to \cite{HochSerre}, where the sequence has been established for groups extensions, to \cite[Chap. XVI, Sec. 6]{CartEilen}, to \cite{Haboush, ClineParsh} for rational algebraic group actions, and to \cite{Guich} for smash products of  algebras and groups. Some other results on homological properties in the category of comodules over smash coproducts can be found in \cite{DasNastTorec} and \cite{Zhu}.\\
Our results can be applied to the following smash coproducts, see \seref{3} for detail:
\begin{enumerate}
\item $B \smashco H$, where $H$ is cosemisimple and $B=C^{{\rm co}H}$.
\item $C\smashco kG$, where $G$ is a group and $C$ is a $G$-graded coalgebra. Then the category of $C\smashco kG$-comodules is isomorphic to the category of $G$-graded $C$-comodules. 
\item $A \smashco k^G$, where $A$ is a Hopf algebra, $G$ is a finite group acting as Hopf algebra automorphisms on $A$ and $k^G$ is the algebra of set functions from $G$ to $k$, see \cite{MontgVegaWither}. Note that $k^G$ is a finite dimensional Hopf algebra. 
\item $A(G) \cong A(H) \smashco A(K)$, where $G$ is an affine algebraic group, semi-direct product of algebraic subgroups $H$ and $K$, with coordinate rings respectively denoted $A(G)$, $A(L)$ and $A(K)$, see \cite{Molnar}. Note that all these rings are commutative Hopf algebras, the rational $G$-modules and the $A(G)$-comodules coincide, and $A(L)$ is an $A(K)$-comodule coalgebra.
\end{enumerate}
 
\section{Preliminary results} \selabel{1}
\subsection{Comodules over a Hopf algebra}
Troughout this paper, we work over a field $k$.
All unlabelled $\ot$ and $\Hom$ are  over $k$.
Let $H$ be a Hopf algebra over $k$. The category $\Mm^H$ consisting of right $H$-comodules and
right $H$-colinear maps is monoidal. The tensor product of two right $H$-comodules $(M,\rho^M)$ and
$(N,\rho^N)$ is the usual tensor product $M\ot N$, with coaction
\begin{equation}\eqlabel{1.1}
\rho^{M\ot N}(m\ot n)=m_{[0]}\ot n_{[0]}\ot m_{[1]}n_{[1]}.
\end{equation}
Here we use the Sweedler notation $\rho(m)=m_{[0]}\ot m_{[1]}$ for the right $H$-coaction $\rho$ on $M$.
The subspace of $H$-coinvariants of $(M,\rho)\in \Mm^H$ is 
defined as
$$M^{{\rm co}H}=\{m\in M~|~\rho^H(m)=m\ot 1\}\cong \Hom^H(k,M).$$
It is known that $H$ is cosemisimple if and only if the coinvariants functor
$(-)^{{\rm co}H}:\ \Mm^H\to \Mm$ is exact. \\
For $M,N\in \Mm^{H}$, the canonical map
$$\iota:\ \Hom(M,N)\ot H\to \Hom(M,N\ot H),~~\iota(f\ot h)(m)=f(m)\ot h$$
is injective (we work over a field), and is considered as an inclusion. $f\in \Hom(M,N)$ is called $H$-rational if $\rho:\ \Hom(M,N)\to \Hom(M,N\ot H)$,
$$\rho(f)(m)=f(m_{[0]})_{[0]} \otimes f(m_{[0]})_{[1]}S(m_{[1]}),$$
factors through $\Hom(M,N)\ot H$. In this situation, we write
$\rho(f)=f_{[0]}\ot f_{[1]}\in \Hom(M,N)\ot H$.
$\rho(f)$ is then characterized by the property
\begin{equation}\eqlabel{1.2}
f_{[0]}(m)\ot f_{[1]}=f(m_{[0]})_{[0]} \otimes f(m_{[0]})_{[1]}S(m_{[1]}),
\end{equation}
or, equivalently,
\begin{equation}\eqlabel{1.2b}
\rho(f(m))=f_{[0]}(m_{[0]})\ot f_{[1]}m_{[1]},
\end{equation}
for all $m\in M$. $\HOM(M,N)$, the subspace of $\Hom(M,N)$ consisting of rational 
maps is a right $H$-comodule. If $M$ or $H$ is finite dimensional, then the canonical
inclusion $\iota$ is an isomorphism, and $\HOM(M,N)=\Hom(M,N)$. It follows from \equref{1.2b} that the coaction on
$\HOM(M,N)$ is designed in such a way that the evaluation map
$\ev:\ \HOM(M,N)\ot M\to N$
is right $H$-colinear. We remark that a different definition of rationality is used in \cite{CaenGued};
actually the characterizing property \equref{1.2} reduces to the one in \cite{CaenGued} if $H$ is
replaced by $H^{\rm op}$.

\subsection{Comodules over a smash coproduct}
A coalgebra in the category $\Mm^H$ is called a right $H$-comodule coalgebra. This is a coalgebra
$C$ with a right $H$-coaction
$$\rho:\ C\to C\ot H,~~\rho(c)=c_{[0]}\ot c_{[1]}$$
such that $\Delta_C$ and $\varepsilon_C$ are right $H$-colinear, that is,
\begin{equation}\eqlabel{1.1a}
\Delta_C(c_{[0]}) \otimes c_{[1]}= c_{1[0]} \otimes c_{2[0]} \otimes c_{1[1]}c_{2[1]}~~ \hbox{and} ~~ \epsilon_C(c_{[0]})c_{[1]}=\epsilon_C(c)1_H,
\end{equation}
for all $c\in C$. The smash coproduct $C \smashco H$ is the vector space $C\ot H$
with counit $\varepsilon_C \smashco \varepsilon_H$ and
comultiplication given by the formula
\begin{equation}\eqlabel{1.3}
\Delta(c \smashco h)=(c_1 \smashco c_{2[1]}h_2) \otimes (c_{2[0]} \smashco h_1),
\end{equation}
for all $c\in C$ and $h\in H$. A direct verification shows that $C \smashco H$ is a coalgebra,
so we can consider the category of right $C \smashco H$-comodules $\Mm^{C\smashco H}$.
$\Mm^{C\smashco H}$ has direct sums, and, since we are working over a field, it is
a Grothendieck category. We can provide the following alternative description of
$\Mm^{C\smashco H}$. Recall from \cite{CaeDasRai} that a right $(C , H)$-comodule
is a vector space $M$ together with a right $C$-coaction $\rho^C$ and a right $H$-coaction
$\rho^H$, with corresponding Sweedler notation
$$\rho^C(m)= m_{\{0\}} \otimes m_{\{1\}}\in M\ot C~~{\rm and}~~
\rho^H(m)=m_{[0]} \otimes m_{[1]}\in M\ot H,$$
satisfying the compatibility relation
\begin{equation}\eqlabel{star}
\rho^C(m_{[0]})\ot m_{[1]}=m_{\{0\}[0]} \otimes m_{\{1\}[0]} \otimes m_{\{0\}[1]}m_{\{1\}[1]},
\end{equation}
for all $m\in M$.
The category $\Mm^{C,H}$ of right $(C,H)$-comodules and right $C$-colinear and $H$-colinear
maps is isomorphic to $\Mm^{C\smashco H}$, see \cite[Prop. 1.3]{CaeDasRai}. We will now present
some properties of $\Mm^{C\smashco H}$.

\begin{proposition}\prlabel{1.1}
Let $C$ be a right $H$-comodule coalgebra, and take $M,N\in \Mm^{C,H}$.
$\HOM^C(M,N)=\Hom^C(M,N) \cap \HOM(M,N)$,
the subspace of $\HOM(M,N)$ consisting of right $C$-colinear maps, is an
$H$-subcomodule of $\HOM(M,N)$. This establishes a functor $\HOM^C(M,-):\
\Mm^{C,H}\to \Mm^H$. Furthermore
$$\Hom^{C\smashco H}(M,N)= \HOM^C(M,N)^{{\rm co}H}.$$
\end{proposition}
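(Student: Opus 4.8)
The statement has three parts: that $\HOM^C(M,N)$ coincides with $\Hom^C(M,N)\cap\HOM(M,N)$ and is an $H$-subcomodule of $\HOM(M,N)$; that $\HOM^C(M,-)$ is a functor $\Mm^{C,H}\to\Mm^H$; and the identification $\Hom^{C\smashco H}(M,N)=\HOM^C(M,N)^{\mathrm{co}H}$. I would organize the proof around these three pieces in that order, since each uses the previous.

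For the first part, the natural approach is to show that the $H$-coaction $\rho$ on $\HOM(M,N)$ (defined via \equref{1.2}) restricts to $\Hom^C(M,N)$, i.e., if $f$ is right $C$-colinear then so is each $f_{[0]}$ appearing in $\rho(f)=f_{[0]}\ot f_{[1]}$. The cleanest way is to write out $\rho^C(f_{[0]}(m))$ and $(f_{[0]}\ot\mathrm{id})(\rho^C(m))$ (tensored with a trailing copy of $H$ to keep track of $f_{[1]}$) and compare. One expands $\rho(f(m))=f_{[0]}(m_{[0]})\ot f_{[1]}m_{[1]}$ from \equref{1.2b}, uses that $f$ is $C$-colinear, and invokes the compatibility relation \equref{star} for $M$ and $N$ together with the $H$-comodule coalgebra axioms \equref{1.1a}. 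Since $\iota$ is an injection (we are over a field), once the relevant identity holds after applying $\iota$, it holds in $\HOM(M,N)\ot C$, so each $f_{[0]}$ is $C$-colinear. The converse inclusion $\Hom^C(M,N)\cap\HOM(M,N)\subseteq\HOM^C(M,N)$ is then just the definition. Being an $H$-subcomodule is exactly the statement that $\rho$ restricts, together with counitarity and coassociativity inherited from $\HOM(M,N)$.

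The functoriality claim is routine: given a morphism $g\colon N\to N'$ in $\Mm^{C,H}$, composition $f\mapsto g\circ f$ sends $C$-colinear rationals to $C$-colinear rationals, and one checks $H$-colinearity of this induced map against the coaction formula \equref{1.2} — this is the standard verification that $\HOM(M,-)$ is a functor to $\Mm^H$, restricted. I would state it and leave the details as a direct check.

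For the last identity, I would compute $\HOM^C(M,N)^{\mathrm{co}H}$ explicitly: $f\in\HOM^C(M,N)$ lies in the coinvariants iff $\rho(f)=f\ot 1_H$, which by \equref{1.2} means $f(m_{[0]})_{[0]}\ot f(m_{[0]})_{[1]}S(m_{[1]})=f(m)\ot 1_H$ for all $m$; applying $S^{-1}$-free manipulations (or simply multiplying the $H$-legs back) this is equivalent to $\rho^H(f(m))=f(m_{[0]})\ot m_{[1]}$, i.e., $f$ is right $H$-colinear. So $\HOM^C(M,N)^{\mathrm{co}H}=\{f\colon M\to N\mid f$ is $C$-colinear and $H$-colinear$\}$, and under the isomorphism $\Mm^{C,H}\cong\Mm^{C\smashco H}$ this is precisely $\Hom^{C\smashco H}(M,N)$. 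The one subtlety worth care is the equivalence between "$\rho(f)=f\ot 1_H$" and "$f$ is $H$-colinear": one direction multiplies the last two tensor factors of \equref{1.2} and uses the antipode axiom $h_{[1]}S(h_{[2]})=\varepsilon(h)1$ applied to $f(m)$; the other direction reverses this using $f(m_{[0]})\ot m_{[1]}\mapsto f(m_{[0]})_{[0]}\ot f(m_{[0]})_{[1]}S(m_{[1]})$ and coassociativity. I expect this antipode bookkeeping to be the only place where one must be slightly careful, and it is where the definition of rationality (with the $S$ in \equref{1.2}) pays off.
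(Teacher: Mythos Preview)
Your proposal is correct and follows essentially the same route as the paper: compute $\rho^C(f_{[0]}(m))\ot f_{[1]}$ using \equref{1.2}, the $C$-colinearity of $f$, and the compatibility \equref{star} to get $f_{[0]}(m_{\{0\}})\ot m_{\{1\}}\ot f_{[1]}$, then strip off the $H$-factor to conclude that the components of $\rho(f)$ are $C$-colinear; the coinvariants identification via $\rho(f)=f\ot 1\Leftrightarrow f$ is $H$-colinear is also exactly what the paper does. The only cosmetic difference is that the paper fixes a $k$-basis $\{h_i\}$ of $H$ to separate components (and also re-verifies rationality of each $f_i$, which you correctly take for granted since $\HOM(M,N)$ is already an $H$-comodule), whereas you phrase the same separation via the injectivity of $\iota$.
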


\begin{proof}
Take $f\in \HOM^C(M,N)$. 
Take a basis $\{h_i~|~i\in I\}$ of $H$ as a vector space. Then we can write
$$\rho(f)=f_{[0]}\ot f_{[1]}= \sum_i f_i\ot h_i,$$
where only a finite number of the $f_i$ are different from $0$.\\
For a fixed $j\in I$, we can write
\begin{equation}\eqlabel{1.1.0}
\Delta(h_j)=\sum_{i\in I} a_{ji}\ot h_i,
\end{equation}
for some $a_{ji}\in H$, with only finitely many of the $a_{ji}$ different from $0$.\\
For all $m\in M$, we have
\begin{eqnarray*}
&&\hspace*{-15mm}
\rho^C(f_{[0]}(m))\ot f_{[1]}
\equal{\equref{1.2}}
\rho^C(f(m_{[0]})_{[0]}) \otimes f(m_{[0]})_{[1]}S(m_{[1]})\\
&\equal{\equref{star}}&
f(m_{[0]})_{\{0\}[0]}  \otimes  f(m_{[0]})_{\{1\}[0]}  \otimes f(m_{[0]})_{\{0\}[1]}f(m_{[0]})_{\{1\}[1]}S(m_{[1]})    \\
&\equal{(*)}&
f(m_{[0]\{0\}})_{[0]} \otimes  m_{[0]\{1\}[0]} \otimes f(m_{[0]\{0\}})_{[1]}m_{[0]\{1\}[1]}S(m_{[1]})\\
&\equal{\equref{star}}&
f(m_{\{0\}[0]})_{[0]}  \otimes m_{\{1\}[0]}  \otimes f(m_{\{0\}[0]})_{[1]}m_{\{1\}[1]}S(m_{\{1\}[2]})S(m_{\{0\}[1]})\\
&=&
f(m_{\{0\}[0]})_{[0]}  \otimes m_{\{1\}}  \otimes f(m_{\{0\}[0]})_{[1]}S(m_{\{0\}[1]})\\
&\equal{\equref{1.2}}&
f_{[0]}(m_{\{0\}})\ot m_{\{1\}} \ot f_{[1]}.
\end{eqnarray*}
At $(*)$, we used the fact that $f$ is right $C$-colinear.
This can be rewritten
as
\begin{equation}\eqlabel{1.1.1}
\sum_i \rho^C(f_i(m))\ot h_{i}=\sum_i f_i(m_{\{0\}})\ot m_{\{1\}} \ot h_i \in
\bigoplus_{i\in I} N\ot H \ot kh_i.
\end{equation}
For $l\in I$, take the projection of \equref{1.1.1} onto $N\ot H \ot kh_l$. This
gives
$$\rho^C(f_l(m))=f_l(m_{\{0\}})\ot m_{\{1\}},$$
and therefore each $f_l$ is right $C$-colinear.\\
A straightforward (and well-known) computation shows that
$$\rho(f_{[0]})(m)\ot f_{[1]}=f_{[0]}(m)\ot \Delta(f_{[1]}).$$
This can be rewritten as
\begin{equation}\eqlabel{1.1.2}
\sum_{i\in I} \rho(f_i)(m)\ot h_i=\sum_{j\in I} f_j(m)\ot \Delta(h_j)
\equal{\equref{1.1.0}} \sum_{i\in I} \sum_{j\in I} f_j(m)\ot a_{ji}\ot h_i.
\end{equation}
For $l\in I$, take the projection of \equref{1.1.2} onto $N\ot H \ot kh_l$. This
gives
$$\rho(f_l)(m)=\sum_{j\in I} f_j(m)\ot a_{jl}.$$
It follows that each $f_l$ is rational. We now have shown that $f_l\in \HOM^C(M,N)$,
for all $l\in I$, and
$$\rho(f)=\sum_{l\in I} f_l\ot h_l\in \HOM^C(M,N)\ot H.$$
We leave it to the reader to show that this construction is functorial.
It is easy to show that $f\in \Hom^H(M,N)$ if and only if $\rho(f)=f\ot 1$, or, equivalently,
$f\in \HOM(M,N)^{{\rm co}H}$. Consequently $f\in \Hom^{C\smashco H}(M,N)$ if and only
if $f\in \Hom^C(M,N)\cap \HOM(M,N)^{{\rm co}H}=\HOM^C(M,N)^{{\rm co}H}$.
\end{proof}

\begin{lemma} \lelabel{1.2}
Let $M,N,P$ be $C \smashco H$-comodules. If $f \in  HOM^C(M, N)$ and $g \in HOM^C(N,P)$, then $g \circ f \in  HOM^C(N,P)$.
\end{lemma}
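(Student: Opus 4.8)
The plan is to verify separately that $g\circ f$ is right $C$-colinear and that it is $H$-rational; granting both, $g\circ f$ lies in $\Hom^C(M,P)\cap\HOM(M,P)=\HOM^C(M,P)$, as recalled in \prref{1.1}. (Since $g\circ f\colon M\to P$, the target here is $\HOM^C(M,P)$.) The $C$-colinearity is the easy half: from $\rho^N(f(m))=f(m_{\{0\}})\otimes m_{\{1\}}$ and $\rho^P(g(n))=g(n_{\{0\}})\otimes n_{\{1\}}$ we obtain
$$\rho^P\bigl(g(f(m))\bigr)=g\bigl(f(m)_{\{0\}}\bigr)\otimes f(m)_{\{1\}}=g\bigl(f(m_{\{0\}})\bigr)\otimes m_{\{1\}},$$
which says precisely that $g\circ f$ is right $C$-colinear.

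For rationality I would exhibit an explicit candidate for $\rho(g\circ f)$. Write $\rho(f)=f_{[0]}\otimes f_{[1]}$ and $\rho(g)=g_{[0]}\otimes g_{[1]}$ as finite sums $\sum_i f_i\otimes h_i$ and $\sum_j g_j\otimes k_j$; then $\xi:=g_{[0]}\circ f_{[0]}\otimes g_{[1]}f_{[1]}=\sum_{i,j}(g_j\circ f_i)\otimes k_jh_i$ is a genuine element of $\Hom(M,P)\otimes H$. It remains to check that $\rho\colon\Hom(M,P)\to\Hom(M,P\otimes H)$ sends $g\circ f$ to the image of $\xi$ under the canonical inclusion $\iota$, i.e.\ that $\rho(g\circ f)(m)=(g_{[0]}\circ f_{[0]})(m)\otimes g_{[1]}f_{[1]}$ for all $m\in M$. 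Starting from the defining formula $\rho(g\circ f)(m)=(g\circ f)(m_{[0]})_{[0]}\otimes(g\circ f)(m_{[0]})_{[1]}S(m_{[1]})$, one applies \equref{1.2b} first to $g$ (with argument $f(m_{[0]})$) and then to $f$, obtaining $\rho^P\bigl(g(f(m_{[0]}))\bigr)=g_{[0]}\bigl(f_{[0]}(m_{[0][0]})\bigr)\otimes g_{[1]}f_{[1]}m_{[0][1]}$. Substituting this, using coassociativity of $\rho^M$ to rewrite $m_{[0][0]}\otimes m_{[0][1]}\otimes m_{[1]}$ as $m_{[0]}\otimes (m_{[1]})_1\otimes(m_{[1]})_2$, and then collapsing $(m_{[1]})_1S((m_{[1]})_2)=\varepsilon_H(m_{[1]})1_H$ via the antipode axiom and $\varepsilon_H(m_{[1]})m_{[0]}=m$ via the counit axiom, the right-hand side becomes $(g_{[0]}\circ f_{[0]})(m)\otimes g_{[1]}f_{[1]}$, as wanted. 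Hence $\rho$ factors through $\Hom(M,P)\otimes H$, so $g\circ f$ is $H$-rational, with $\rho(g\circ f)=g_{[0]}\circ f_{[0]}\otimes g_{[1]}f_{[1]}$.

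Putting the two halves together yields $g\circ f\in\HOM^C(M,P)$. The only point requiring care is the Sweedler bookkeeping in the rationality step --- keeping straight which comodule leg each application of \equref{1.2b} acts on, and applying the antipode identity to the correct tensor factors --- but no genuine obstacle arises: everything reduces to the coassociativity, counit, and antipode axioms for $H$. (One could also argue abstractly, using that $\HOM(-,-)$ is the internal Hom of the monoidal category $\Mm^H$ and that composition is a morphism there, but the direct computation above is self-contained.)
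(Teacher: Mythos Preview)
Your proof is correct and follows the same line as the paper's, only in far greater detail: the paper's entire proof is the sentence ``It is well-known that $g\circ f$ is rational and $C$-colinear,'' whereas you have actually carried out the verification. Your observation that the conclusion should read $\HOM^C(M,P)$ rather than $\HOM^C(N,P)$ is also correct --- this is a typo in the statement.
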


\begin{proof} 
It is well-known that $g \circ f$ is rational and $C$-colinear.
\end{proof}

\begin{lemma}\lelabel{1.3}
We have a functor $-\ot C:\ \Mm^{C\smashco H}\to \Mm^{C\smashco H}$.
\end{lemma}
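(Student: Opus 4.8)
The plan is to give $M\ot C$, for $M\in\Mm^{C,H}\cong\Mm^{C\smashco H}$, the \emph{cofree} right $C$-coaction coming from $\Delta_C$, together with the tensor product right $H$-coaction. Explicitly, for $m\ot c\in M\ot C$ I would put
\[
\rho^C(m\ot c)=(m\ot c_1)\ot c_2 \quad\hbox{and}\quad \rho^H(m\ot c)=(m_{[0]}\ot c_{[0]})\ot m_{[1]}c_{[1]},
\]
where $c\mapsto c_{[0]}\ot c_{[1]}$ denotes the $H$-coaction on $C$; the second formula is just the monoidal structure \equref{1.1} of $\Mm^H$ applied to $M$ and the underlying $H$-comodule of $C$. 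That $(M\ot C,\rho^C)$ is a right $C$-comodule is immediate from coassociativity and counitality of $\Delta_C$, and $(M\ot C,\rho^H)$ is a right $H$-comodule because $\Mm^H$ is monoidal.

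The main step will be to verify the compatibility relation \equref{star} for $x=m\ot c$. Expanding the left hand side with the formulas above gives
\[
\rho^C(x_{[0]})\ot x_{[1]}=m_{[0]}\ot (c_{[0]})_1\ot (c_{[0]})_2\ot m_{[1]}c_{[1]},
\]
while applying $\rho^H$ separately to $x_{\{0\}}=m\ot c_1$ and to $x_{\{1\}}=c_2$ turns the right hand side of \equref{star} into
\[
x_{\{0\}[0]}\ot x_{\{1\}[0]}\ot x_{\{0\}[1]}x_{\{1\}[1]}=m_{[0]}\ot (c_1)_{[0]}\ot (c_2)_{[0]}\ot m_{[1]}(c_1)_{[1]}(c_2)_{[1]}.
\]
These two expressions coincide: the first identity in \equref{1.1a}, which expresses that $\Delta_C$ is right $H$-colinear, says precisely that $(c_{[0]})_1\ot (c_{[0]})_2\ot c_{[1]}=(c_1)_{[0]}\ot (c_2)_{[0]}\ot (c_1)_{[1]}(c_2)_{[1]}$ in $C\ot C\ot H$, and substituting this (with $m_{[0]}$ adjoined in front and $m_{[1]}$ multiplied into the $H$-slot) converts the left hand side into the right hand side. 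Hence $M\ot C\in\Mm^{C,H}$. I note in passing that the $C$-coaction of $M$ itself plays no role in this verification.

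Finally, on morphisms, for a right $C$-colinear and right $H$-colinear map $f:M\to N$ I would set $(-\ot C)(f)=f\ot\id_C$. It is right $H$-colinear as the tensor product of the $H$-colinear maps $f$ and $\id_C$, and it is right $C$-colinear because $(f\ot\id_C\ot\id_C)\circ(\id_M\ot\Delta_C)=f\ot\Delta_C=(\id_N\ot\Delta_C)\circ(f\ot\id_C)$; thus it is a morphism in $\Mm^{C,H}\cong\Mm^{C\smashco H}$. The identities $(-\ot C)(\id_M)=\id_{M\ot C}$ and $(-\ot C)(g\circ f)=(g\ot\id_C)\circ(f\ot\id_C)=(-\ot C)(g)\circ(-\ot C)(f)$ are immediate, so $-\ot C$ is a functor. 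The only point requiring any care is bookkeeping the Sweedler indices in the display above, and the key observation is that \equref{1.1a} is exactly the identity needed to make the compatibility work.
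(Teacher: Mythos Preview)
Your proof is correct and follows essentially the same approach as the paper: the same cofree $C$-coaction via $\Delta_C$, the tensor product $H$-coaction, and the verification of \equref{star} via \equref{1.1a}. You go slightly further by spelling out functoriality (which the paper leaves to the reader) and by observing that the $C$-coaction on $M$ is not used.
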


\begin{proof}
Let $M$ be a right $(C,H)$-comodule. $C$ coacts on
$M\ot C$ as follows
\begin{equation}\eqlabel{1.3.1}
\rho^C(m\ot c)=m \otimes c_1 \otimes c_2.
\end{equation}
Let us verify that the compatibility condition \equref{star} holds.
\begin{eqnarray*}
&&\hspace*{-2cm}
\rho^C((m\ot c)_{[0]})\ot (m\ot c)_{[1]}=
 m_{[0]} \otimes c_{[0]1} \otimes c_{[0]2} \otimes  m_{[1]}c_{[1]} \\
&\equal{\equref{1.1a}}&m_{[0]} \otimes c_{1[0]} \otimes c_{2[0]} \otimes  m_{[1]}c_{1[1]}c_{2[1]} \\
&=&(m \otimes c_1)_{[0]} \otimes c_{2[0]} \otimes  (m \otimes c_1)_{[1]}c_{2[1]} \\
&=&(m \otimes c)_{\{0\}[0]} \otimes  (m \otimes c)_{\{1\}[0]} \otimes  (m \otimes c)_{\{0\}[1]} (m \otimes c)_{\{1\}[1]} .
\end{eqnarray*}
We leave it to the reader to show that this construction is functorial.
\end{proof}

\begin{lemma}\lelabel{1.4}
Let $L\in \Mm^H$ and $M\in \Mm^{C\smashco H}$. Then $L \otimes M$ is a right $C \smashco H$-comodule.
\end{lemma}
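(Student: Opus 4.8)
The plan is to make $L\ot M$ into a right $(C,H)$-comodule and then to invoke the isomorphism $\Mm^{C,H}\cong\Mm^{C\smashco H}$. First I would define the $H$-coaction on $L\ot M$ by forgetting the $C$-comodule structure on $M$, regarding $L$ and $M$ as objects of the monoidal category $\Mm^H$, and using the tensor product coaction \equref{1.1}, that is,
\begin{equation*}
\rho^H(l\ot m)=l_{[0]}\ot m_{[0]}\ot l_{[1]}m_{[1]}.
\end{equation*}
For the $C$-coaction I would let $C$ act only on the second tensor factor:
\begin{equation*}
\rho^C(l\ot m)=l\ot m_{\{0\}}\ot m_{\{1\}}.
\end{equation*}
Coassociativity and counitality of $\rho^H$ are exactly the statement that $\Mm^H$ is monoidal (so that $L\ot M\in\Mm^H$), and coassociativity and counitality of $\rho^C$ follow immediately from the corresponding properties of $\rho^C$ on $M$.

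The only genuine point to verify is that the compatibility relation \equref{star} holds for $L\ot M$, and I would check this by a direct Sweedler-notation computation. Expanding the left-hand side gives
\begin{align*}
\rho^C\bigl((l\ot m)_{[0]}\bigr)\ot (l\ot m)_{[1]}
&=\rho^C(l_{[0]}\ot m_{[0]})\ot l_{[1]}m_{[1]}\\
&=l_{[0]}\ot m_{[0]\{0\}}\ot m_{[0]\{1\}}\ot l_{[1]}m_{[1]},
\end{align*}
and then applying \equref{star} for $M$, in the form $m_{[0]\{0\}}\ot m_{[0]\{1\}}\ot m_{[1]}=m_{\{0\}[0]}\ot m_{\{1\}[0]}\ot m_{\{0\}[1]}m_{\{1\}[1]}$, turns this into $l_{[0]}\ot m_{\{0\}[0]}\ot m_{\{1\}[0]}\ot l_{[1]}m_{\{0\}[1]}m_{\{1\}[1]}$. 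On the other side, since $(l\ot m)_{\{0\}}=l\ot m_{\{0\}}$ and $(l\ot m)_{\{1\}}=m_{\{1\}}$, expanding the right-hand side of \equref{star} with the help of \equref{1.1} applied to the $H$-coaction on $l\ot m_{\{0\}}$ yields exactly the same expression. Hence \equref{star} holds, so $L\ot M\in\Mm^{C,H}$ and therefore $L\ot M$ is a right $C\smashco H$-comodule.

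Functoriality of the construction is routine and I would leave it to the reader. I do not expect any real obstacle here beyond careful bookkeeping; the only subtlety is keeping the two Sweedler conventions straight (the bracket $[\,\cdot\,]$ for the $H$-coaction, the braces $\{\cdot\}$ for the $C$-coaction) and remembering that the $H$-coaction used on $M$ is the one coming from its $(C,H)$-comodule structure, while $C$ is made to coact trivially on the factor $L$.
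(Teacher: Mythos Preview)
Your proof is correct and follows essentially the same approach as the paper: both define the $C$-coaction on $L\ot M$ via the second factor, use the diagonal $H$-coaction from \equref{1.1}, and verify the compatibility relation \equref{star} by the same direct Sweedler computation reducing to \equref{star} for $M$. Your version is a bit more explicit (you spell out the $H$-coaction and the passage through $\Mm^{C,H}\cong\Mm^{C\smashco H}$), but the argument is the same.
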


\begin{proof}
$C$ coacts on
$L\ot M$ as follows
\begin{equation}\eqlabel{1.4.1}
\rho^C(l\ot m)=(l \otimes m)_{\{0\}} \otimes (l \otimes m)_{\{1\}}=l \otimes m_{\{0\}} \otimes m_{\{1\}}.
\end{equation}
Let us verify that the compatibility condition \equref{star} holds.
\begin{eqnarray*}
&&\hspace*{-2cm}
\rho^C((l\ot m)_{[0]})\ot (l\ot m)_{[1]}=
l_{[0]} \otimes m_{[0]\{0\}} \otimes m_{[0]\{1\}} \otimes l_{[1]}m_{[1]} \\
&\equal{\equref{star}}& l_{[0]} \otimes m_{\{0\}[0]} \otimes m_{\{1\}[0]} \otimes l_{[1]} m_{\{0\}[1]} m_{\{1\}[1]} \\
&=&( l \otimes m_{\{0\}})_{[0]} \otimes m_{\{1\}[0]} \otimes (l \otimes m_{\{0\}})_{[1]}m_{\{1\}[1]} \\
&=&( l \otimes m)_{\{0\}[0]} \otimes (l \otimes m)_{\{1\}[0]} \otimes (l \otimes m)_{\{0\}[1]}(l \otimes m)_{\{1\}[1]}.
\end{eqnarray*}
\end{proof}

\begin{proposition}\prlabel{1.5}
Let $M,N\in \Mm^{C\smashco H}$ and $L\in \Mm^H$. Then we have an isomorphism
$$\Hom^{C\smashco H}(L\ot M,N)\cong \Hom^H(L, \HOM^C(M,N)).$$
\end{proposition}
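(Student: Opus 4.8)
The plan is to construct the isomorphism as an adjunction-type natural bijection by exhibiting mutually inverse maps explicitly. Given $\varphi\in\Hom^{C\smashco H}(L\ot M,N)$, I would like to define $\Phi(\varphi):\ L\to\HOM^C(M,N)$ by $\Phi(\varphi)(l)(m)=\varphi(l\ot m)$. The first task is to check that $\Phi(\varphi)(l)$ really lands in $\HOM^C(M,N)$, i.e.\ that for each fixed $l$ the map $m\mapsto\varphi(l\ot m)$ is both right $C$-colinear and $H$-rational. Here one uses the description of the $(C,H)$-comodule structure on $L\ot M$ from \leref{1.4}: the $C$-coaction only touches the $M$-factor via \equref{1.4.1}, so $C$-colinearity of $\varphi$ translates directly into $C$-colinearity of $\Phi(\varphi)(l)$. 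For $H$-rationality one uses the characterizing property \equref{1.2} together with the $H$-coaction on $L\ot M$ from \equref{1.1}; since $L$ is finitely generated by the span of the (finitely many) components of $\rho^L(l)$ as far as the relevant computation is concerned, one can pin down $\rho(\Phi(\varphi)(l))$ explicitly. In fact it is cleanest to argue as in the proof of \prref{1.1}: pick a basis $\{h_i\}$ of $H$, write $\rho^L(l)=\sum_i l_i\ot h_i$, and show $\rho(\Phi(\varphi)(l))=\sum_i \Phi(\varphi)(l_i)\ot S(h_i)$ lies in $\HOM^C(M,N)\ot H$.

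Next I would verify that $\Phi(\varphi)$ is right $H$-colinear as a map $L\to\HOM^C(M,N)$, which amounts to matching the coaction on $L$ with the coaction on $\HOM^C(M,N)$ inherited from \prref{1.1}. This is the computation $\rho^{\HOM^C(M,N)}(\Phi(\varphi)(l))=\Phi(\varphi)(l_{[0]})\ot l_{[1]}$, unwound using \equref{1.2b} and colinearity of $\varphi$; the antipode $S$ appearing in the rational coaction cancels against the $S$ introduced when passing from the $L$-coaction in $L\ot M$ to the intrinsic coaction of $\HOM^C$, exactly as in the last lines of the proof of \prref{1.1}. Conversely, for $\psi\in\Hom^H(L,\HOM^C(M,N))$ I would define $\Psi(\psi):\ L\ot M\to N$ by $\Psi(\psi)(l\ot m)=\psi(l)(m)=\ev\circ(\psi\ot\id_M)$; since $\ev:\HOM^C(M,N)\ot M\to N$ is right $H$-colinear (as recorded in Section~1.1) and, being a composition of $C$-colinear maps followed by evaluation, is also $C$-colinear, the composite $\Psi(\psi)$ is $C\smashco H$-colinear, i.e.\ $(C,H)$-colinear. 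One then checks $\Phi$ and $\Psi$ are mutually inverse, which is immediate from the formulas, and that both are natural in $M$, $N$, $L$.

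The main obstacle is the bookkeeping in the rationality/$H$-colinearity verifications: one must be careful that the coaction on $L\ot M$ in $\Mm^{C\smashco H}$ (combining \equref{1.1} on the $H$-side and \equref{1.4.1} on the $C$-side), the rational $H$-coaction on $\HOM(M,N)$ from \equref{1.2}, and the induced $H$-coaction on $\HOM^C(M,N)$ from \prref{1.1} are all threaded together consistently, with the antipode appearances matching up. Everything else — $C$-colinearity, the inverse bijection, functoriality — is a routine direct check. I would in fact shortcut much of this by invoking \prref{1.1}: once we know $\Phi(\varphi)(l)\in\HOM^C(M,N)$ and that $\Phi(\varphi)$ is $H$-colinear, the identity $\Hom^{C\smashco H}(M,N)=\HOM^C(M,N)^{{\rm co}H}$ from \prref{1.1} handles the special case $L=k$, and the general case follows by the universal property of the tensor product $L\ot M$ together with tracking the $H$-coactions, i.e.\ the isomorphism is just the standard $\Hom$–$\otimes$ adjunction refined to the colinear setting.
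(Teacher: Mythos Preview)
Your approach is exactly the paper's: restrict the tensor--hom adjunction $\phi(f)(l)(m)=f(l\ot m)$ to the colinear setting and check that both $\phi$ and $\phi^{-1}$ land where they should. There is, however, a genuine slip in your rationality computation. You write $\rho(\Phi(\varphi)(l))=\sum_i \Phi(\varphi)(l_i)\ot S(h_i)$, but the correct outcome is $\rho(\Phi(\varphi)(l))=\Phi(\varphi)(l_{[0]})\ot l_{[1]}$ with no antipode on the $L$-tensorand --- as you yourself state two sentences later. The computation in the paper runs
\[
\rho(\phi(f)(l))(m)=f(l\ot m_{[0]})_{[0]}\ot f(l\ot m_{[0]})_{[1]}S(m_{[1]})
= f(l_{[0]}\ot m_{[0]})\ot l_{[1]}m_{[1]}S(m_{[2]})=\phi(f)(l_{[0]})(m)\ot l_{[1]},
\]
so the only antipode that appears, the $S(m_{[1]})$ from \equref{1.2}, cancels against the $m$-part of the diagonal $H$-coaction on $L\ot M$, leaving $l_{[1]}$ untouched. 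Your remark that ``the antipode $S$ appearing in the rational coaction cancels against the $S$ introduced when passing from the $L$-coaction'' misidentifies the cancellation: there is no second antipode coming from $L$. Once this is corrected, rationality of $\Phi(\varphi)(l)$ and $H$-colinearity of $\Phi(\varphi)$ are established in a single stroke, exactly as the paper does, and the rest of your outline (the $C$-colinearity check and the inverse via evaluation) is fine.
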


\begin{proof}
We will first show that the canonical isomorphism
$$\phi:\ \Hom(L \otimes M, N) \to \Hom(L,\Hom(M,N)),~~\phi(f)(l)(m)= f(l \otimes m)$$
restricts and corestricts to a map
$$\phi:\ \Hom^{C\smashco H}(L\ot M,N)\to \Hom^H(L, \HOM^C(M,N)).$$
Take $f\in \Hom^{C\smashco H}(L\ot M,N)$. For all $l\in L$ and $m\in M$, we have that
\begin{eqnarray*}
&&\hspace*{-2cm}
\rho(\phi(f)(l))(m)=
(\phi(f)(l)(m_{[0]}))_ {[0]} \ot (\phi(f)(l)(m_{[0]}))_ {[1]}   S(m_{[1]}) \\
 &=&  f(l \ot m_{[0]})_ {[0]} \ot f(l \ot m_{[0]})_ {[1]}   S(m_{[1]}) \\
 &\equal{(*)}&  f((l \ot m_{[0]})_ {[0]}) \ot (l \ot m_{[0]})_ {[1]}  S(m_{[1]}) \\
 &=& f(l_{[0]} \ot m_{[0]}) \ot l_{[1]}m_{[1]}S(m_{[2]})\\
 &=& f(l_{[0]} \ot m)\ot l_{[1]}= \phi(f)(l_{[0]})(m)\ot l_{[1]}.
 \end{eqnarray*}
At $(*)$, we used the fact that $f$ is $H$-colinear. This proves that $\phi(f)(l)$
is rational, and, moreover, that
$$\rho(\phi(f)(l))=\phi(f)(l_{[0]})\ot l_{[1]}.$$
This  tells us that $\phi(f):\ L\to \HOM(M,N)$ is right $H$-colinear. It remains to
be shown that $\phi(f)(l):\ M\to N$ is right $C$-colinear, for every $l\in L$. Indeed, for all $m\in M$,
we have that
$$
\rho^C((\phi(f)(l))(m)=\rho^C(f(l\ot m))\equal{(*)}
f(l\ot m_{\{0\}})\ot m_{\{1\}}= (\phi(f)(l))(m_{\{0\}})\ot m_{\{1\}}.$$
At $(*)$, we used the fact that $f$ is right $C$-colinear.
The inverse $\phi^{-1}:\ \Hom(L,\Hom(M,N))\to \Hom(L \otimes M, N)$ of $\phi$ is given by the formula
$$\phi^{-1}(g)(l\ot m)=(g(l))(m).$$
The proof is finished if we can show that $\phi^{-1}$ restricts and corestricts to a map
$\phi^{-1}:\ \Hom^H(L, \HOM^C(M,N))\to \Hom^{C\smashco H}(L\ot M,N)$.
Take a right $H$-colinear map $g:\ L\to \HOM^C(M,N)$. For all $l\in L$ and $m\in M$, we
have that
\begin{eqnarray*}
&&\hspace*{-15mm}
\rho^C(\phi^{-1}(g)(l\ot m))=\rho^C((g(l))(m))\equal{(*)}g(l)(m_{\{0\}})\ot m_{\{1\}}\\
&=&\phi^{-1}(g)(l\ot m_{\{0\}})\ot m_{\{1\}}=
\phi^{-1}(g)((l\ot m)_{\{0\}})\ot (l\ot m)_{\{1\}},
\end{eqnarray*}
and it follows that $\phi^{-1}(g)$ is right $C$-colinear.
As $g:\ L\to \HOM(M,N)$ is right $H$-linear, we have that $\rho(g(l))=g(l_{[0]})\ot l_{[1]}$,
and
\begin{equation}\eqlabel{1.5.1}
g(l)_{[0]}(m)\ot g(l)_{[1]}=g(l_{[0]})(m)\ot l_{[1]},
\end{equation}
for all $m\in M$. Now
\begin{eqnarray*}
&&\hspace*{-15mm}
\phi^{-1}(g)((l\ot m)_{[0]})\ot (l\ot m)_{[1]}=
\phi^{-1}(g)(l_{[0]}\ot m_{[0]})\ot l_{[1]}m_{[1]}\\
&=& (g(l_{[0]}))( m_{[0]})\ot l_{[1]}m_{[1]}
\equal{\equref{1.5.1}}
g(l)_{[0]}( m_{[0]}) g(l)_{[1]}m_{[1]}\\
&\equal{\equref{1.2}}&
(g(l))(m_{[0]})_{[0]}\ot (g(l))(m_{[0]})_{[1]}S(m_{[1]})m_{2]}\\
&=&\rho^H((g(l))(m))=\rho^H(\phi^{-1}(g)(l\ot m)),
\end{eqnarray*}
and this proves that  $\phi^{-1}(g)$ is right $H$-colinear.
\end{proof}

\begin{lemma}\lelabel{1.6}
We make the following additional assumptions: $H$ is commutative and $C$ is
cocommutative. Furthermore $M$ is finite dimensional, or $H$ and $C$ are
finite dimensional. If $M,N\in {\mathcal M}^{C\smashco H}$, then $\Hom^C(M,N)
\in {\mathcal M}^{C\smashco H}$. 
\end{lemma}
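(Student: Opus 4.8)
The plan is to construct a right $C$-coaction on $\Hom^C(M,N)$ that is compatible, in the sense of \equref{star}, with the right $H$-coaction it already carries; by \cite[Prop. 1.3]{CaeDasRai} this exhibits $\Hom^C(M,N)$ as an object of $\Mm^{C\smashco H}$. Under the standing hypotheses the canonical inclusion $\iota$ is bijective, so $\HOM(M,N)=\Hom(M,N)$ and hence $\HOM^C(M,N)=\Hom^C(M,N)$; by \prref{1.1} this is a right $H$-comodule, with coaction $\rho^H=\rho$ given by \equref{1.2}.

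To build the $C$-coaction I would start from the observation that for $f\in\Hom^C(M,N)$ the composite $\rho^C_N\circ f\colon M\to N\ot C$ is right $C$-colinear, where $N\ot C$ carries the coaction $n\ot c\mapsto n\ot c_1\ot c_2$ as in \equref{1.3.1}; this is immediate from coassociativity of $\rho^C_N$, $\rho^C_M$ and the colinearity of $f$. Since $M$ (resp.\ $C$) is finite dimensional, the canonical map $\Hom(M,N)\ot C\to\Hom(M,N\ot C)$, $g\ot c\mapsto(m\mapsto g(m)\ot c)$, is an isomorphism, so we may write $\rho^C_N\circ f=\sum_a g_a\ot c_a$ with the $c_a\in C$ linearly independent and $g_a\in\Hom(M,N)$; equivalently $\sum_a g_a(m)\ot c_a=\rho^C_N(f(m))=f(m_{\{0\}})\ot m_{\{1\}}$ for all $m\in M$. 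Here cocommutativity of $C$ is used: comparing $(\rho^C_N\ot\id)$ and $(\id\ot\Delta_C)$ applied to this identity, the obstruction to each $g_a$ being $C$-colinear is exactly the difference $f(m_{\{0\}})\ot m_{\{1\}}\ot m_{\{2\}}-f(m_{\{0\}})\ot m_{\{2\}}\ot m_{\{1\}}$, which vanishes. Thus $g_a\in\Hom^C(M,N)$ for every $a$, and I set $\rho^C(f)=f_{\{0\}}\ot f_{\{1\}}:=\sum_a g_a\ot c_a\in\Hom^C(M,N)\ot C$; it is characterized by $f_{\{0\}}(m)\ot f_{\{1\}}=\rho^C_N(f(m))$ for all $m$, and the coassociativity and counit axioms for $\rho^C$ follow immediately from those for $\rho^C_N$.

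It remains to verify \equref{star} for the pair $(\rho^C,\rho^H)$, that is, $\rho^C(f_{[0]})\ot f_{[1]}=f_{\{0\}[0]}\ot f_{\{1\}[0]}\ot f_{\{0\}[1]}f_{\{1\}[1]}$. I would evaluate both sides at an arbitrary $m\in M$, landing in $N\ot C\ot H$. On the left, the characterization of $\rho^C$, then \equref{1.2}, then \equref{star} applied to $N$, then the $C$-colinearity of $f$ reduce it to $f(m_{[0]\{0\}})_{[0]}\ot(m_{[0]\{1\}})_{[0]}\ot f(m_{[0]\{0\}})_{[1]}\,(m_{[0]\{1\}})_{[1]}\,S(m_{[1]})$. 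On the right, applying \equref{1.2} to $f_{\{0\}}$ and then the same two facts reduces it to the identical expression except that the two rightmost tensor factors in $H$, namely $S(m_{[1]})$ and $(m_{[0]\{1\}})_{[1]}$, occur in the opposite order. Commutativity of $H$ identifies the two sides, proving \equref{star}; hence $\Hom^C(M,N)$ is a right $(C,H)$-comodule, i.e.\ an object of $\Mm^{C\smashco H}$.

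The only genuinely delicate point is the Sweedler bookkeeping in this last verification and in the identity defining $\rho^C$: one must keep track simultaneously of $\rho^C_M$, $\rho^C_N$, the $H$-coaction on $\Hom^C(M,N)$ coming from \equref{1.2}, and the $H$-coaction on $C$, and see precisely that cocommutativity of $C$ is what forces $\rho^C(f)$ to lie in $\Hom^C(M,N)\ot C$ rather than merely in $\Hom(M,N)\ot C$, while commutativity of $H$ is what forces \equref{star}. The two finiteness hypotheses are used only to guarantee $\HOM(M,N)=\Hom(M,N)$ and the surjectivity of $\Hom(M,N)\ot C\to\Hom(M,N\ot C)$; everything else is routine.
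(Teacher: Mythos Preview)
Your proposal is correct and follows essentially the same route as the paper: define $\rho^C$ on $\Hom^C(M,N)$ by the formula $f_{\{0\}}(m)\ot f_{\{1\}}=\rho^C_N(f(m))$ (equivalently $=f(m_{\{0\}})\ot m_{\{1\}}$, since $f$ is $C$-colinear), use cocommutativity of $C$ to see that the components land in $\Hom^C(M,N)$, and use commutativity of $H$ to verify \equref{star}. The only cosmetic difference is that you phrase the characterization of $\rho^C$ via $\rho^C_N$ rather than via $\rho^C_M$, which lets you deduce coassociativity and counit directly from those of $\rho^C_N$ without invoking cocommutativity at that step; the paper uses the $M$-side characterization throughout and appeals to cocommutativity once more in the coassociativity check, but the arguments are otherwise identical.
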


\begin{proof}
The $H$-coaction is defined as in \prref{1.1}, taking into account that
$\Hom^C(M,N)=\HOM^C(M,N)$. The $C$-coaction is defined as follows:
for $f\in \Hom^C(M,N)$,
$\rho^C(f)=f_{\{0\}} \otimes f_{\{1\}}\in \Hom(M,N\ot C)\cong \Hom(M,N)\ot C$ if and only if
\begin{equation}\eqlabel{1.6.1}
f_{\{0\}}(m) \otimes f_{\{1\}}=f(m_{\{0\}}) \otimes m_{\{1\}},
\end{equation}
for all $m\in M$. Fix a basis $\{c_i~|~i\in I\}$ of $C$. There exist $f_i\in \Hom(M,N)$
such that $\rho^C(f)=\sum_i f_i\ot c_i$. For all $m\in M$, we have that
\begin{eqnarray*}
&&\hspace*{-15mm}
\sum_i \rho^C(f_i(m))\ot c_i=
f_{\{0\}}(m)_{\{0\}}\ot f_{\{0\}}(m)_{\{1\}}\ot f_{\{1\}}\\
&\equal{\equref{1.6.1}}&
f(m_{\{0\}})_{\{0\}} \ot f((m_{\{0\}})_{\{1\}}\ot m_{\{1\}}
\equal{(*)} f(m_{\{0\}})\ot m_{\{1\}}\ot m_{\{2\}}\\
&\equal{(**)}&
f_{\{0\}}(m_{\{0\}})\ot m_{\{1\}}\ot f_{\{1\}}=\sum_i f_i(m_{\{0\}})\ot m_{\{1\}}\ot c_i.
\end{eqnarray*}
At $(*)$, we used the $C$-colinearity of $f$; at $(**)$, we used the cocommutativity of $C$.
It follows that $\rho^C(f_i(m))=f_i(m_{\{0\}})\ot m_{\{1\}}$, for all $m\in M$ and
$i\in I$. We conclude that each $f_i$ is right $C$-colinear, and
$\rho^C(f)\in \Hom^C(M,N)\ot C$. Let us show that $\rho^C$ is coassociative. For all
$m\in M$, we have that
\begin{eqnarray*}
&&\hspace*{-15mm}
f_{\{0\}\{0\}}(m) \otimes f_{\{0\}\{1\}} \otimes f_{\{1\}}\equal{\equref{1.6.1}}
f_{\{0\}}(m_{\{0\}})\ot m_{\{1\}} \ot f_{\{1\}}\\
&\equal{\equref{1.6.1}}& f_{\{0\}}(m_{\{0\}})\ot m_{\{2\}} \ot m_{\{1\}}\equal{(*)}
f(m_{\{0\}}) \otimes \Delta_C(m_{\{1\}})\\
&\equal{\equref{1.6.1}}& f_{\{0\}}(m)\ot \Delta^C(f_{(1)}).
\end{eqnarray*}
At $(*)$, we used the cocommutativity of $C$. It follows that
$\rho^C(f_{\{0\}})\ot f_{\{1\}}=f_{\{0\}}\ot \Delta_C(f_{\{1\}})$, so that
$\rho^C$ is coassociative. It is obvious that 
$$\varepsilon_C(f_{\{1\}})f_{\{0\}}(m)=f(m_{\{0\}})\varepsilon_C(m_{\{1\}})=f(m),$$
and the counit property follows. Finally, we have to prove the compatibility condition
\equref{star}. For all $m\in M$, we have that
\begin{eqnarray*}
&&\hspace*{-15mm}
f_{\{0\}[0]}(m) \otimes  f_{\{1\}[0]} \otimes  f_{\{0\}[1]} f_{\{1\}[1]}\\
&\equal{\equref{1.2}}&
f_{\{0\}}(m_{[0]})_{[0]} \otimes  f_{\{1\}[0]} \otimes  f_{\{0\}}(m_{[0]})_{[1]}S(m_ {[1]}) f_{\{1\}[1]}\\
&\equal{\equref{1.6.1}}&
f(m_{[0]\{0\}})_{[0]} \otimes  m_{[0]\{1\}[0]} \otimes  f(m_{[0]\{0\}})_{[1]}S(m_ {[1]})  m_{[0]\{1\}[1]}\\
&\equal{\equref{star}}&
f(m_{\{0\}[0]})_{[0]} \otimes  m_{\{1\}[0]} \otimes  f(m_{\{0\}[0]})_{[1]}S(m_ {\{1\}[2]}) 
S(m_ {\{0\}[1]})m_{\{1\}[1]}\\
&\equal{(*)}&
f(m_{\{0\}[0]})_{[0]} \otimes  m_{\{1\}} \otimes  f(m_{\{0\}[0]})_{[1]} S(m_ {\{0\}[1]})\\
&\equal{\equref{1.2}}&
f_{[0]}(m_{\{0\}}) \otimes  m_{\{1\}} \otimes  f_{[1]}
= f_{[0]\{0\}}(m) \otimes  f_{[0]\{1\}} \otimes  f_{[1]},
\end{eqnarray*}
and the compatibility relation follows; At $(*)$, we used the fact that $H$ is commutative.
\end{proof}

\section{A spectral sequence}\selabel{2}
\begin{lemma}\lelabel{2.1}
Let $I$ be a right $(C,H)$-comodule. We have two left exact functors (resp.
covariant and contravariant)
$\HOM^C(I,-),~\HOM^C(-,I):\ \Mm^{C\smashco H}\to \Mm^H$.
\end{lemma}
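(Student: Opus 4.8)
The plan is to realise both functors as subfunctors of the ordinary $C$-colinear $\Hom$-functors on the Grothendieck category $\Mm^C$ of right $C$-comodules --- which are left exact --- and then to verify that $H$-rationality survives when one transports elements along an exact sequence.

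\emph{Functoriality.} That $\HOM^C(I,N)\in\Mm^H$ is \prref{1.1}. If $g:\ N\to N''$ is a morphism in $\Mm^{C\smashco H}$, then $\HOM^C(I,g):\ \alpha\mapsto g\circ\alpha$ takes values in $\HOM^C(I,N'')$ by \leref{1.2}, and using that $g$ is $H$-colinear one computes directly that $\rho(g\circ\alpha)(i)=(g\ot\id_H)(\rho(\alpha)(i))$ for all $i\in I$, whence $\HOM^C(I,g)$ is $H$-colinear; so $\HOM^C(I,-):\ \Mm^{C\smashco H}\to\Mm^H$ is a (covariant) functor. Dually, for a morphism $g:\ M'\to M$ the map $\HOM^C(g,I):\ f\mapsto f\circ g$ takes values in $\HOM^C(M',I)$ by \leref{1.2}, and $\rho(f\circ g)(m')=\rho(f)(g(m'))$ because $g$ is $H$-colinear, so $\HOM^C(g,I)$ is $H$-colinear and $\HOM^C(-,I)$ is a contravariant functor to $\Mm^H$.

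\emph{Exactness.} Since the forgetful functor $\Mm^H\to\Mm$ is exact and faithful, it suffices to prove exactness of the underlying sequences of vector spaces. Let $0\to N'\mapright{f} N\mapright{g} N''\to 0$ be exact in $\Mm^{C\smashco H}$; it is then exact as a sequence of vector spaces, hence exact in $\Mm^C$, and applying the left-exact functors $\Hom^C(I,-)$ and $\Hom^C(-,I)$ yields exact rows of $C$-colinear $\Hom$-spaces inside which the corresponding $\HOM^C$-spaces sit as subspaces. From this it is immediate that $\HOM^C(I,f)$ (resp. $\HOM^C(g,I)$) is injective and that the ensuing composite vanishes. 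For exactness in the middle of $\HOM^C(I,-)$: if $\alpha\in\HOM^C(I,N)$ with $g\circ\alpha=0$, then $\alpha=f\circ\beta$ for a unique $\beta\in\Hom^C(I,N')$; writing $\rho(\alpha)=\sum_j\alpha_j\ot h_j$ with $\{h_j\}$ a $k$-basis of $H$, the $H$-colinearity of $f$ gives $(f\ot\id_H)(\rho(\beta)(i))=\rho(\alpha)(i)=\sum_j\alpha_j(i)\ot h_j$, and comparing coefficients of the $h_j$ together with the injectivity of $f$ produces linear maps $\beta_j:\ I\to N'$ with $f\circ\beta_j=\alpha_j$ and $\rho(\beta)=\sum_j\beta_j\ot h_j$, so $\beta$ is rational and lies in $\HOM^C(I,N')$. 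For $\HOM^C(-,I)$ the dual argument works: if $\psi\in\HOM^C(N,I)$ with $\psi\circ f=0$, then $\psi=\psi''\circ g$ for a unique $C$-colinear $\psi'':\ N''\to I$; writing $\rho(\psi)=\sum_j\psi_j\ot h_j$, the $H$-colinearity of $f$ forces $\psi_j\circ f=0$, so each $\psi_j$ descends to some $\psi''_j:\ N''\to I$, and then the $H$-colinearity and surjectivity of $g$ give $\rho(\psi'')=\sum_j\psi''_j\ot h_j$, whence $\psi''\in\HOM^C(N'',I)$.

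The real content --- and the step I expect to be the only genuine obstacle --- is precisely this rationality check: one must ensure that the canonical lift of an element of the kernel, which a priori only exists in the purely $C$-colinear category, is again $H$-rational; this is where the $H$-colinearity of $f$ (resp. $g$) and the coefficient comparison against a $k$-basis of $H$ are used. Everything else reduces to \prref{1.1}, \leref{1.2}, and a diagram chase inside the left-exact sequences of ordinary $\Hom^C$-spaces.
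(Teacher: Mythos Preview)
Your proof is correct and supplies precisely the details the paper omits: the paper's own proof is the two-line remark that \prref{1.1} handles objects and that functoriality and left exactness are ``easy to see'', whereas you carry out the functoriality check via \leref{1.2} and the rationality argument for middle exactness explicitly. The approach is the same; you have simply written out what the authors left to the reader, and the key step you flag --- using injectivity of $f$ (resp.\ surjectivity of $g$) together with a coefficient comparison against a basis of $H$ to transport rationality along the kernel lift --- is exactly the point that needs to be verified.
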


\begin{proof}
In \prref{1.1} the two functors are defined at the level of objects.
It is easy to see that this construction is functorial, and that the two functors
are left exact.
 \end{proof}
 
It is well-known that the category of comodules over a coalgebra over a field is a
Grothendieck category. Therefore the categories $\Mm^C$, $\Mm^H$ and
$\Mm^{C\smashco H}$ are Grothendieck categories. For a right $(C,H)$-comodule $M$,
we will establish a spectral
sequence connecting the right derived functors of 
$\HOM^C(M , -)$, $\Hom^{C\smashco H}(M , -)$ and $(-)^{{\rm co}H}$.

\begin{lemma}\lelabel{2.2}
Take right $(C,H)$-comodules $M$ and $I$, and assume that $I$ is injective in
$\Mm^{C\smashco H}$. Then $\HOM^C(M,I)$ is an injective object in $\Mm^H$.
\end{lemma}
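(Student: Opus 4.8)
The plan is to use the standard adjunction criterion for preservation of injectives: a right adjoint preserves injectives whenever its left adjoint is exact. Here the relevant adjunction is exactly the one furnished by \prref{1.5}, namely
$$\Hom^{C\smashco H}(L\ot M,N)\cong \Hom^H(L,\HOM^C(M,N)),$$
for $L\in\Mm^H$ and $N\in\Mm^{C\smashco H}$, with $M$ fixed. This exhibits $\HOM^C(M,-):\Mm^{C\smashco H}\to\Mm^H$ as right adjoint to the functor $-\ot M:\Mm^H\to\Mm^{C\smashco H}$, which is well defined by \leref{1.4}. So the statement reduces to checking that $-\ot M$ is an exact functor from $\Mm^H$ to $\Mm^{C\smashco H}$.

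First I would record the adjunction precisely: naturality of the isomorphism in \prref{1.5} in both $L$ and $N$ is routine (it comes from naturality of the underlying hom-tensor adjunction over $k$), so $(-\ot M,\HOM^C(M,-))$ is genuinely an adjoint pair. Second, I would argue exactness of $-\ot M$. A sequence in $\Mm^H$ (or in $\Mm^{C\smashco H}$) is exact iff it is exact as a sequence of $k$-vector spaces, since the forgetful functors to $\Mm$ are exact and faithful and these are Grothendieck categories; tensoring a short exact sequence of vector spaces by the fixed vector space $M$ over the field $k$ is exact. Hence $-\ot M$ carries short exact sequences in $\Mm^H$ to short exact sequences in $\Mm^{C\smashco H}$, i.e.\ it is exact. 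Third, invoke the general homological fact: if $(F,G)$ is an adjoint pair with $F$ exact, then $G$ preserves injectives. Applying this with $F=-\ot M$ and $G=\HOM^C(M,-)$ gives that $\HOM^C(M,I)$ is injective in $\Mm^H$ whenever $I$ is injective in $\Mm^{C\smashco H}$, which is the claim.

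There is no serious obstacle; the only point requiring a little care is making sure the adjunction of \prref{1.5} is being read in the correct variance — the functor $-\ot M$ must land in $\Mm^{C\smashco H}$ (guaranteed by \leref{1.4}, where the $C$-coaction on $L\ot M$ is $l\ot m\mapsto l\ot m_{\{0\}}\ot m_{\{1\}}$) and be the left adjoint, with $\HOM^C(M,-)$ the right adjoint landing in $\Mm^H$ — and that exactness is tested on the level of $k$-modules rather than within the comodule categories directly. Both are immediate once stated, so the proof is short.
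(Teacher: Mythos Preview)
Your proof is correct and is essentially the same as the paper's: the paper spells out concretely, for a given short exact sequence $0\to L_1\to L_2\to L_3\to 0$ in $\Mm^H$, that tensoring with $M$ gives an exact sequence in $\Mm^{C\smashco H}$, applies $\Hom^{C\smashco H}(-,I)$, and then invokes the isomorphism of \prref{1.5} --- which is exactly the unwinding of the abstract principle you cite (right adjoint of an exact functor preserves injectives). The only difference is that you phrase it categorically while the paper carries out the element-level verification.
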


\begin{proof}
Consider an exact sequence $0\to L_1\to L_2\to L_3\to 0$ in $\Mm^H$, and, a fortiori,
in $\Mm_k$. The sequence $0\to L_1\ot M\to L_2\ot M\to L_3\ot M\to 0$ is exact in
$\Mm_k$; the spaces and maps in the sequence are in $\Mm^{C\smashco H}$, hence
we have an exact sequence in $\Mm^{C\smashco H}$. Now $I\in \Mm^{C\smashco H}$,
so 
\begin{eqnarray*}
0& \to & \Hom^{C\smashco H}( L_3 \otimes M, I) \to \Hom^{C\smashco H}(L_2 \otimes M ,I) \\
&\to&
 \Hom^{C\smashco H}(L_1 \otimes M, I) \to 0
 \end{eqnarray*}
is an exact sequence in $\Mm_k$. It follows from \prref{1.5} that this sequence is isomorphic
to
\begin{eqnarray*}
0& \to &\Hom^H( L_3, \HOM^C(M, I)) \to  \Hom^H( L_2, \HOM^C(M, I)) \\
&\to  &
\Hom^H( L_1, \HOM^C(M, I)) \to 0.
\end{eqnarray*}
This shows that the functor $\Hom^H( -, \HOM^C(M, I)):\ \Mm^H\to \Mm_k$ is exact,
proving the assertion.
\end{proof}

\begin{lemma}\lelabel{2.3}
If $I\in \Mm^{C\smashco H}$ is injective, then $\HOM^C(-, I):\ \Mm^{C\smashco H}\to \Mm^H$ is an exact functor.
\end{lemma}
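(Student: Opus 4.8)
The plan is to follow the strategy of the proof of \leref{2.2}, turning the injectivity of $I$ into an exactness statement for $\HOM^C(-,I)$ by means of \prref{1.5}. Since $\HOM^C(-,I)$ is already known to be left exact (\leref{2.1}), it is enough to prove that for every short exact sequence $0\to M_1\to M_2\to M_3\to 0$ in $\Mm^{C\smashco H}$, with first map $\iota$, the induced morphism $\HOM^C(\iota,I):\ \HOM^C(M_2,I)\to \HOM^C(M_1,I)$ is an epimorphism in $\Mm^H$; left exactness then upgrades $0\to \HOM^C(M_3,I)\to \HOM^C(M_2,I)\to \HOM^C(M_1,I)$ to a short exact sequence, which is what ``exact'' means here.

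First I would fix an arbitrary $L\in\Mm^H$. By \leref{1.4} the spaces $L\ot M_i$ lie in $\Mm^{C\smashco H}$, and since we work over a field the sequence $0\to L\ot M_1\to L\ot M_2\to L\ot M_3\to 0$ is exact in $\Mm_k$; as all its maps are $C\smashco H$-colinear it is exact in $\Mm^{C\smashco H}$ as well (exactness being tested on underlying vector spaces, exactly as in the proof of \leref{2.2}). Since $I$ is injective, applying $\Hom^{C\smashco H}(-,I)$ produces a short exact sequence, and \prref{1.5}, which is natural in $M$ (as one sees at once from the formula $\phi(f)(l)(m)=f(l\ot m)$, under which precomposition with $\id_L\ot\iota$ becomes postcomposition with $\HOM^C(\iota,I)$), identifies it with
\[
0\to \Hom^H(L,\HOM^C(M_3,I))\to \Hom^H(L,\HOM^C(M_2,I))\to \Hom^H(L,\HOM^C(M_1,I))\to 0.
\]
Hence $\Hom^H(L,\HOM^C(\iota,I))$ is surjective for every $L\in\Mm^H$.

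Specializing to $L=\HOM^C(M_1,I)$, the identity morphism $\id_L$ then lies in the image of $\Hom^H(L,\HOM^C(\iota,I))$, so there is a right $H$-colinear map $s:\ \HOM^C(M_1,I)\to \HOM^C(M_2,I)$ with $\HOM^C(\iota,I)\circ s=\id$. Thus $\HOM^C(\iota,I)$ is a split epimorphism in $\Mm^H$, in particular an epimorphism, which is exactly what was needed.

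The argument contains no real obstacle; the one point deserving attention is the bookkeeping in the second paragraph, namely the naturality of the isomorphism of \prref{1.5} in the comodule variable and the resulting identification of the connecting maps, both of which are immediate from the explicit description of $\phi$. The remaining ingredients (exactness of $L\ot-$ over a field, $C\smashco H$-colinearity of the maps $\id_L\ot\iota$, and the fact that a map admitting a section is an epimorphism) are routine and already appear, in essence, in the proof of \leref{2.2}.
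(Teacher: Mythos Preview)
Your proof is correct and takes a genuinely different, more categorical route than the paper's. The paper proceeds element by element: given $f\in\HOM^C(M,I)$ and a monomorphism $i:M\to N$, it picks a \emph{finite-dimensional} $H$-subcomodule $V$ of $\HOM^C(M,I)$ containing $f$ (via the fundamental theorem of comodules), shows the evaluation map $\ev:V\ot M\to I$ is $C\smashco H$-colinear, extends it along $V\ot i$ by injectivity of $I$ to $G:V\ot N\to I$, and then checks by hand that $g(n)=G(f\ot n)$ lies in $\HOM^C(N,I)$ and restricts to $f$. Your argument bypasses the finite-dimensional reduction and the explicit verification entirely: you feed the whole comodule $L=\HOM^C(M_1,I)$ into the adjunction of \prref{1.5} and lift the identity, obtaining a global $H$-colinear section in one stroke. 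What the paper's approach buys is a concrete description of the preimage, independent of knowing \prref{1.5} is natural in the $M$-variable; what your approach buys is brevity and the stronger conclusion that $\HOM^C(\iota,I)$ is actually \emph{split} epi in $\Mm^H$, not merely surjective. The naturality of \prref{1.5} in $M$ that you invoke is indeed immediate from the formula for $\phi$, so there is no hidden gap.
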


\begin{proof}
We know from \leref{2.1} that $\HOM^C(-, I)$ is left exact. Let $i:\ M\to N$
be a monomorphism in $\Mm^{C\smashco H}$. We need to show that
$\HOM^C(i,I):\ \HOM^C(N,I)\to \HOM^C(M,I)$ is surjective.\\
Take $f\in \HOM^C(M,I)$, and let $V$ be a finite dimensional $H$-subcomodule of  $\HOM^C(M, I)$ containing $f$. By \leref{1.4}, the map
$V\ot i:\ V\ot M\to V\ot N$ is a monomorphism in $\Mm^{C\smashco H}$. We have seen that the evaluation map
$\ev:\ V\ot M\to I,~~F(v\ot m)=v(m)$
is right $H$-colinear. It follows from \leref{1.4} that $V\ot M\in \Mm^{C\smashco H}$. Now we claim that $\ev$ is also 
right $C$-colinear. Indeed, for all $v\in V$ and $m\in M$, we have that
$$\rho^C(\ev(v\ot m))= \rho^C(v(m))\equal{(*)} v(m_{\{0\}})\ot m_{\{1\}}=
(\ev\ot C)(\rho^C(v\ot m)).$$
At $(*)$, we used the right $C$-colinearity of $v$. It follows that $\ev$ is a morphism in $\Mm^{C\smashco H}$,
and the injectivity of $I\in \Mm^{C\smashco H}$ entails the existence of a $C\smashco H$-colinear map
$G:\ V\ot N\to I$ such that $G\circ (V\ot i)=\ev$. Consider the map $g:\ N\to I$,
$g(n)=G(f\ot n)$. For all $n\in N$, we have that
\begin{eqnarray*}
&&\hspace*{-2cm}
g(n_{[0]})_{[0]}\ot g(n_{[0]})_{[1]}S(n_{[1]})=
G(f\ot n_{[0]})_{[0]}\ot G(f\ot n_{[0]})_{[1]}S(n_{[1]})\\
&\equal{(*)}& 
G(f_{[0]}\ot n_{[0]})\ot f_{[1]} n_{[1]}S(n_{[2]})=G(f_{[0]}\ot n)\ot f_{[1]}.
\end{eqnarray*}
At $(*)$, we used the right $C$-colinearity of $G$.
This proves that $g$ is $H$-rational, and $\rho^H(g)= G(f_{[0]}\ot -)\ot f_{[1]}$.
Furthermore
$$
\rho^C(g(n))=\rho^C(G(f\ot n))\equal{(*)}G(f\ot n_{\{0\}})\ot n_{\{1\}}=g(n_{\{0\}})\ot n_{\{1\}},$$
proving that $g$ is right $C$-colinear. At $(*)$, we used the right $C$-colinearity of $G$.
We conclude that $g\in \HOM^C(N,I)$, and, finally
$\HOM^C(i,I)(g)=g\circ i=f$, since
$$f(m)=\ev(f\ot m)=(G\circ (V\ot i))(f\ot m)=(g\circ i)(m),$$
for all $m\in M$. This completes the proof.
\end{proof}

\begin{lemma}\lelabel{2.4}
\begin{enumerate}
\item If $I\in \Mm^{C\smashco H}$ is injective, then $I$ is also injective as
an object of $\Mm^C$.
\item Assume that $H$ is cosemisimple, and that $M\in \Mm^{C\smashco H}$
is finite dimensional and projective as a right $C$-comodule. Then $M$ is also projective as a right
$C\smashco H$-comodule.
\end{enumerate}
\end{lemma}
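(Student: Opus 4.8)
The plan is to show that the forgetful functor $U\colon\Mm^{C\smashco H}\cong\Mm^{C,H}\to\Mm^C$ preserves injectives. This functor is restriction of the coaction along the projection $\pi=\id_C\ot\varepsilon_H\colon C\smashco H\to C$, which one checks from \equref{1.3} and \equref{1.1a} to be a morphism of coalgebras; in particular $U$ is additive and exact. Over a field every injective comodule is a direct summand of a cofree one: for $I$ injective in $\Mm^{C\smashco H}$ the structure map $\rho_I\colon I\to I\ot(C\smashco H)$ is a monomorphism, and injectivity of $I$ yields a $C\smashco H$-colinear retraction, so $I$ is a retract of $I\ot(C\smashco H)$. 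Since $U$ preserves retracts, it suffices to prove that $V\ot(C\smashco H)$ is injective in $\Mm^C$ for every vector space $V$, and for this it is enough that $C\smashco H$ be a cofree right $C$-comodule. A direct computation (using the isomorphism $\Mm^{C,H}\cong\Mm^{C\smashco H}$ of \cite[Prop.~1.3]{CaeDasRai}, i.e.\ corestriction along $\pi$) gives that the $C$-coaction on $C\smashco H$ is $c\ot h\mapsto(c_1\ot c_{2[1]}h)\ot c_{2[0]}$, and the key point is that the twist
$$\theta\colon C\ot H\to C\ot H,\qquad\theta(c\ot h)=c_{[0]}\ot c_{[1]}h,$$
with inverse $c\ot h\mapsto c_{[0]}\ot S(c_{[1]})h$, is an isomorphism of right $C$-comodules from this comodule onto the cofree $C$-comodule $C\ot H$ with coaction $c\ot h\mapsto(c_1\ot h)\ot c_2$; this follows in one line from the comodule-coalgebra axiom \equref{1.1a}. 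Hence $V\ot(C\smashco H)$ is cofree (on $V\ot H$), in particular injective, in $\Mm^C$, and so is its retract $I$.

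\textbf{Part (2).} Here the plan is to factor $\Hom^{C\smashco H}(M,-)$ through $\HOM^C(M,-)$ and use that each factor is exact. Taking $L=k$ (the trivial right $H$-comodule) in \prref{1.5}, and using $k\ot M\cong M$ together with $\Hom^H(k,-)=(-)^{{\rm co}H}$, gives a natural isomorphism
$$\Hom^{C\smashco H}(M,-)\ \cong\ (-)^{{\rm co}H}\circ\HOM^C(M,-)\colon\ \Mm^{C\smashco H}\to\Mm_k$$
(this is also the last statement of \prref{1.1}). The functor $(-)^{{\rm co}H}\colon\Mm^H\to\Mm_k$ is exact because $H$ is cosemisimple. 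For $\HOM^C(M,-)\colon\Mm^{C\smashco H}\to\Mm^H$, it is left exact by \leref{2.1}; since $M$ is finite dimensional, $\HOM^C(M,-)=\Hom^C(M,-)$, and as a sequence of $H$-comodules is exact precisely when the underlying sequence of $k$-spaces is, it suffices to know that $\Hom^C(M,-)\colon\Mm^{C\smashco H}\to\Mm_k$ is exact. But this is the composite of the exact forgetful functor $\Mm^{C\smashco H}\to\Mm^C$ with $\Hom^C(M,-)\colon\Mm^C\to\Mm_k$, which is exact precisely because $M$ is projective in $\Mm^C$. Therefore $\Hom^{C\smashco H}(M,-)$ is a composite of exact functors, hence exact, so $M$ is projective in $\Mm^{C\smashco H}$.

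\textbf{Expected obstacle.} The only genuine computation is in (1): correctly identifying the underlying right $C$-comodule structure of the cofree $C\smashco H$-comodule and checking that the twist $\theta$ trivializes it. Part (2) is essentially formal once \prref{1.5} is available, the only additional inputs being the standard fact that $(-)^{{\rm co}H}$ is exact exactly when $H$ is cosemisimple, and the identification $\HOM^C(M,-)=\Hom^C(M,-)$ valid for finite-dimensional $M$.
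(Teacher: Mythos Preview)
Your proof is correct; Part~(2) is essentially the paper's argument verbatim (factor $\Hom^{C\smashco H}(M,-)$ as $(-)^{{\rm co}H}\circ\HOM^C(M,-)$ via \prref{1.1}/\prref{1.5} and observe both factors are exact).

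For Part~(1) the strategies diverge, though both end by exhibiting $I$ as a $C$-colinear retract of a cofree $C$-comodule. The paper does not pass through the full $C\smashco H$-coaction at all: it uses the $C$-coaction $\rho^C\colon I\to I\ot C$ directly, observing from \equref{star} that $\rho^C$ is also $H$-colinear, hence a monomorphism in $\Mm^{C\smashco H}$ into the comodule $I\ot C$ of \leref{1.3}; injectivity of $I$ splits it, and since the $C$-coaction on $I\ot C$ is already the cofree one \equref{1.3.1}, the conclusion is immediate with no further computation. Your route instead embeds $I$ into $I\ot(C\smashco H)$ and then untwists the induced $C$-coaction on $C\smashco H$ via $\theta$. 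Your argument is a bit longer but is the generic ``corestriction along a coalgebra surjection with cofree source'' argument, so it would apply unchanged to any coalgebra map $D\to C$ making $D$ cofree over $C$; the paper's argument is shorter and more tailored, exploiting the specific $(C,H)$-comodule description to avoid the twist computation entirely.
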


\begin{proof}
(1) We have seen in \leref{1.2} that $I\ot C\in \Mm^{C\smashco H}$. For all
$u\in I$, we have
$$\rho^H(\rho^C(u))=
u_{\{0\}[0]} \otimes u_{\{1\}[0]} \otimes u_{\{0\}[1]}u_{\{1\}[1]} 
\equal{\equref{star}}\rho^C(u_{[0]})\ot u_{[1]},$$
so that $\rho^C:\ I\to I\ot C$ is right $H$-colinear. It is well-known that
$\rho^C$ is a monomorphism in $\Mm^C$, and we can conclude that
$\rho^C$ is a monomorphism in $\Mm^{C\smashco H}$. Thus we have
a short exact sequence
$$0 \rightarrow I \mapright{\rho^C} I \otimes C \rightarrow (I \otimes C)/I \rightarrow 0$$
in $\Mm^{C\smashco H}$. Since $I\in \Mm^{C\smashco H}$ is injective,
this sequence splits in $\Mm^{C\smashco H}$, and, a fortiori, in $\Mm^C$.
Hence $I$ is a direct factor of $I \otimes C$ in $\Mm^C$. It is well-known that
$I \otimes C$ is injective as a $C$-comodule, and we conclude that $I$ is injective
as a $C$-comodule.\\
(2) Take an exact sequence
$$0 \rightarrow N_1  \rightarrow N_2 \rightarrow N_3 \rightarrow 0$$
in $\Mm^{C\smashco H}$, and, a fortiori, in $\Mm^C$. $M\in \Mm^C$ is projective,
so the sequence
$$0 \rightarrow \Hom^C(M, N_1)  \rightarrow \Hom^C(M,N_2) \rightarrow \Hom^C(M,N_3) \rightarrow 0$$
is exact in $\Mm^C$. The functor $(-)^{{\rm co}H}:\ \Mm^C\to \Mm_k$ is exact since
$H$ is cosemisimple, so we have an exact sequence
$$0 \rightarrow \Hom^C(M, N_1)^{{\rm co}H}  \rightarrow \Hom^C(M,N_2)^{{\rm co}H} \rightarrow \Hom^C(M,N_3)^{{\rm co}H} \rightarrow 0$$
in $\Mm_k$. By \prref{1.1}, this sequence can be rewritten as
$$0 \rightarrow \Hom^{C\smashco H}(M, N_1)  \rightarrow \Hom^{C\smashco H}(M,N_2) \rightarrow \Hom^{C\smashco H}(M,N_3) \rightarrow 0.$$
We conclude that $\Hom^{C\smashco H}(M,-)$ is an exact functor, as needed.
\end{proof}

For $M \in {\mathcal M}^C$, let $\Ext^C(M, -)$ be the right derived functors of the functor 
$\Hom^C(M, -):\ \Mm^C\to \Mm_k$. For $M,N \in \Mm^C$, $\Ext^{C,p}(M,N)=H^p(\Hom^C(M, \Ee^{\star}))$, the $p$-th cohomology group of the complex $\Hom^C(M, \Ee^{\star})$
associated to an injective resolution $\Ee^{\star}$ of $N$ in $\Mm^C$.\\
If $M$ is finite dimensional, then $M^*\in {}^C\Mm$ with left $C$-coaction
$$\lambda(m^*)=\sum_i \lan m^*,m_{i[0]}\ran m_{i[1]}\ot m_i^*,$$
for all $m^*\in M^*$, where $\sum_i m_i\ot m_i^*\in M\ot M^*$ is the finite dual basis of $M$.
The natural isomorphism $-\ot M^*\cong \Hom(M,-):\ \Mm^C\to \Mm_k$ restricts to
a natural isomorphism $-\square_C M^*\cong \Hom^C(M,-)$, where $\square_C$
is the cotensor product over $C$. This implies immediately that we have natural
isomorphism 
$\Tor^C(N,M^*)\cong \Ext^C(M,N)$ between the right derived functors of 
$-\square_C M^*$ and $\Hom^C(M,-)$.\\
Take $M\in \Mm^{C\smashco H}$, and let $\EXT^C(M,-)$ be the right derived functors
of the functor  $\HOM^C(M,-)$ from \prref{1.1}.

\begin{lemma}\lelabel{2.5}
Take $M,N \in \Mm^{C\smashco H}$, and assume that $M$ or $H$ is finite dimensional.
Then
$\EXT^C(M,N)= \Ext^C(M,N)$ as vector spaces.
\end{lemma}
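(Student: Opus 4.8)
The plan is to compute both derived functors from one and the same injective resolution, exploiting that a resolution by injectives in $\Mm^{C\smashco H}$ also resolves $N$ in $\Mm^C$.

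First I would record the consequence of the finiteness hypothesis noted in \seref{1}: if $M$ or $H$ is finite dimensional, the canonical inclusion $\iota$ is an isomorphism, so $\HOM(M,X)=\Hom(M,X)$, and hence, for $X\in\Mm^{C\smashco H}$, $\HOM^C(M,X)=\Hom^C(M,X)\cap\HOM(M,X)=\Hom^C(M,X)$ as vector spaces, naturally in $X$. Writing $U:\ \Mm^H\to\Mm_k$ and $V:\ \Mm^{C\smashco H}\to\Mm^C$ for the two (exact) forgetful functors, this says that $U\circ\HOM^C(M,-)=\Hom^C(M,-)\circ V$ as functors $\Mm^{C\smashco H}\to\Mm_k$. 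Note also that, under the isomorphism $\Mm^{C,H}\cong\Mm^{C\smashco H}$, the right $C$-comodule $V(N)$ is exactly the one occurring in $\Ext^C(M,N)$, and likewise for $M$.

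Next I would pick an injective resolution $0\to N\to\Ee^{\star}$ in $\Mm^{C\smashco H}$. By \leref{2.4}(1) each $\Ee^{p}$ is injective in $\Mm^C$, so $0\to V(N)\to V(\Ee^{\star})$ is an injective resolution in $\Mm^C$. Since $U$ is exact it commutes with passage to cohomology, so as vector spaces $\EXT^{C,p}(M,N)=U\bigl(H^{p}(\HOM^C(M,\Ee^{\star}))\bigr)=H^{p}\bigl(U(\HOM^C(M,\Ee^{\star}))\bigr)=H^{p}\bigl(\Hom^C(M,V(\Ee^{\star}))\bigr)$, and the last group equals $\Ext^{C,p}(M,N)$ because the right derived functors of $\Hom^C(M,-)$ can be computed from any injective resolution of $V(N)$ in $\Mm^C$. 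Comparing the two expressions yields the claim.

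I expect no genuine obstacle here. The only points requiring care are that \leref{2.4}(1) is precisely what turns a $\Mm^{C\smashco H}$-injective resolution into a $\Mm^C$-injective one, so that the comparison is a literal equality of complexes rather than a quasi-isomorphism, and that the forgetful functors are exact, so that forming cohomology commutes with them; consequently the identification is an equality of vector spaces only (the $H$-comodule structure on the left-hand side is discarded), exactly as stated.
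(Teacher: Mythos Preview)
Your proposal is correct and follows essentially the same approach as the paper: take an injective resolution of $N$ in $\Mm^{C\smashco H}$, use \leref{2.4}(1) to see it is also an injective resolution in $\Mm^C$, and then invoke the finiteness hypothesis to identify $\HOM^C(M,-)$ with $\Hom^C(M,-)$ on the resolution. Your additional bookkeeping with the forgetful functors $U$ and $V$ is a clean way to phrase what the paper leaves implicit.
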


\begin{proof}
Take an injective resolution $\Ee^\star$ of $N\in \Mm^{C\smashco H}$. It follows from
\leref{2.4} that $\Ee^\star$ is also an injective resolution of $N\in \Mm^{C}$.
Since $M$ or $H$ is finite dimensional, we have that
$\HOM^C(M,P)= \Hom^C(M,P)$ for all $P\in \Mm^{C\smashco H}$, and consequently
$$\EXT^{C}(M,N)=H^\bullet(\HOM^C(M,\Ee^*))=H^\bullet(\Hom^C(M,\Ee^*))=\Ext^{C}(M,N).$$
\end{proof}

\begin{theorem}\thlabel{2.6}
For $M,N\in \Mm^{C\smashco H}$ and $L\in \Mm^H$, we have a spectral sequence
\begin{equation}\eqlabel{2.6.1}
\Ext^{H,p}(L, \EXT^{C,q}(M,N)) \Rightarrow \Ext^{C\smashco H,p+q}(L \otimes M,N),
\end{equation}
for all $p,q\geq 0$.
\end{theorem}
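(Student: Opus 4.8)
The plan is to obtain \equref{2.6.1} as an instance of Grothendieck's spectral sequence for a composite of functors \cite{Groth}, in the same spirit as \cite{Magid,Gued,CaenGued}. Recall the general statement: if $\mathcal A$, $\mathcal B$, $\mathcal C$ are abelian categories, $\mathcal A$ and $\mathcal B$ have enough injectives, $F:\ \mathcal A\to\mathcal B$ and $G:\ \mathcal B\to\mathcal C$ are left exact, and $F$ carries injective objects of $\mathcal A$ to $G$-acyclic objects of $\mathcal B$, then for every $A\in\mathcal A$ there is a convergent first-quadrant spectral sequence $(R^pG)\bigl((R^qF)(A)\bigr)\Rightarrow R^{p+q}(G\circ F)(A)$. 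I would apply this with $\mathcal A=\Mm^{C\smashco H}$, $\mathcal B=\Mm^H$, $\mathcal C=\Mm_k$, and with $F=\HOM^C(M,-)$ and $G=\Hom^H(L,-)$.

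First I would verify the hypotheses. The categories $\Mm^{C\smashco H}$ and $\Mm^H$ are Grothendieck categories, hence have enough injectives. Left exactness of $F$ is \leref{2.1}, and left exactness of $G$ is the standard left exactness of a $\Hom$ functor. The essential point is that $F$ sends injectives to $G$-acyclic objects: by \leref{2.2}, $F$ maps an injective object of $\Mm^{C\smashco H}$ to an injective object of $\Mm^H$, and an injective object of $\Mm^H$ is acyclic for every left exact functor out of $\Mm^H$, in particular for $G$. Hence Grothendieck's theorem applies to $G\circ F$.

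Next I would identify the three families of derived functors appearing in the spectral sequence. By the definitions recorded just before the statement, $(R^qF)(N)=\EXT^{C,q}(M,N)$ and $(R^pG)(-)=\Ext^{H,p}(L,-)$. For the abutment I need to recognise the composite $G\circ F$: by \prref{1.5} there is an isomorphism $\Hom^H(L,\HOM^C(M,N))\cong\Hom^{C\smashco H}(L\ot M,N)$, and I would note that this isomorphism is natural in $N$, being the restriction of the canonical (hence natural) isomorphism $\phi$ and its inverse $\phi^{-1}$ from the proof of \prref{1.5}. Therefore $G\circ F\cong\Hom^{C\smashco H}(L\ot M,-)$ as functors $\Mm^{C\smashco H}\to\Mm_k$, so that $R^{p+q}(G\circ F)(N)\cong\Ext^{C\smashco H,p+q}(L\ot M,N)$. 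Substituting these three identifications into Grothendieck's spectral sequence yields exactly \equref{2.6.1}.

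I do not expect a serious obstacle here: the genuine work has already been carried out in the preparatory lemmas, the acyclicity condition being \leref{2.2} and the identification of the composite being \prref{1.5}. The one subtlety worth spelling out is the naturality of the isomorphism of \prref{1.5}, since Grothendieck's theorem requires $G\circ F$ to coincide with $\Hom^{C\smashco H}(L\ot M,-)$ as functors, not merely objectwise; this is immediate from the fact that the maps $\phi$ and $\phi^{-1}$ used in the proof of \prref{1.5} are the evident natural transformations.
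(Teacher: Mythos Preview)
Your proposal is correct and follows essentially the same approach as the paper: apply Grothendieck's spectral sequence to the composite $G\circ F$ with $F=\HOM^C(M,-)$ and $G=\Hom^H(L,-)$, invoking \leref{2.2} for the injectivity-preservation and \prref{1.5} for the identification of $G\circ F$. If anything, you are slightly more explicit than the paper in spelling out the enough-injectives hypothesis and the naturality of the isomorphism in \prref{1.5}.
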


\begin{proof}
We have left exact functors
$F=\HOM^C(M,-):\ \Mm^{C\smashco H}\to \Mm^H$ and $G= \Hom^H(L,-):\ \Mm^H\to \Mm_k$.
Now $G\circ F= \Hom^{C\smashco H}(L\ot M, N)$ (\prref{1.5})
and $F$ preserves injectives (\leref{2.2}), so we have the Grothendieck spectral
sequence for composite functors
$$(R^pG)((R^qF)(N)) \Rightarrow R^{p+q}(G \circ F)(N),$$
for $p,q\geq 0$, see \cite[Theorem 2.4.1]{Groth}, which specifies to \equref{2.6.1}.
\end{proof}

We have seen in \seref{1} that the coinvariants functor $(-)^{{\rm co}H}$ is
naturally isomorphic to $\Hom^H(k,-)$. This implies that the right derived functors
$R^p(-)^{{\rm co}H}$ are naturally isomorphic to $\Ext^H(k,-)$, and we obtain the
following corollary from \thref{2.6}.

\begin{corollary}\colabel{2.7}
For $M,N\in \Mm^{C\smashco H}$, we have a spectral sequence
$$R^p(\EXT^{C,q}(M,N))^{{\rm co}H} \Rightarrow \Ext^{C\smashco H,p+q}(M,N).$$
\end{corollary}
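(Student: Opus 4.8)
The plan is to obtain this as a special case of \thref{2.6}, taking for $L$ the trivial right $H$-comodule $k$, with coaction $\rho^H(1)=1\ot 1$. First I would note that by \leref{1.4} the space $k\ot M$ is a right $C\smashco H$-comodule, and that the canonical $k$-linear isomorphism $k\ot M\to M$, $\lambda\ot m\mapsto \lambda m$, is straightforwardly seen to be both right $C$-colinear and right $H$-colinear, using that the $C$- and $H$-coactions on $k\ot M$ are, by \equref{1.4.1} and \equref{1.1}, just the ones on $M$ transported along this map. Hence $k\ot M\cong M$ in $\Mm^{C\smashco H}$, and therefore
$$\Ext^{C\smashco H,p+q}(k\ot M,N)\cong \Ext^{C\smashco H,p+q}(M,N)$$
for all $p,q\geq 0$.

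Next I would invoke the identification recalled just before the statement: the coinvariants functor $(-)^{{\rm co}H}:\ \Mm^H\to \Mm_k$ is naturally isomorphic to $\Hom^H(k,-)$, so that the right derived functors $R^p(-)^{{\rm co}H}$ are naturally isomorphic to $\Ext^{H,p}(k,-)$ as functors $\Mm^H\to \Mm_k$. Since $\EXT^{C,q}(M,N)$ is an object of $\Mm^H$ (it is the $q$-th right derived functor of $\HOM^C(M,-):\ \Mm^{C\smashco H}\to \Mm^H$ from \prref{1.1}), evaluating this natural isomorphism at $\EXT^{C,q}(M,N)$ gives
$$\Ext^{H,p}\bigl(k,\EXT^{C,q}(M,N)\bigr)\cong R^p\bigl(\EXT^{C,q}(M,N)\bigr)^{{\rm co}H}.$$

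Finally I would substitute $L=k$ into the spectral sequence \equref{2.6.1} of \thref{2.6} and rewrite its $E_2$-term and its abutment via the two displayed isomorphisms above, arriving at the asserted spectral sequence $R^p(\EXT^{C,q}(M,N))^{{\rm co}H} \Rightarrow \Ext^{C\smashco H,p+q}(M,N)$. I do not expect any genuine obstacle: the only two things requiring verification are the elementary check that $k\ot M\cong M$ in $\Mm^{C\smashco H}$ and the observation that the isomorphism $R^p(-)^{{\rm co}H}\cong\Ext^{H,p}(k,-)$ is a natural isomorphism of functors and hence may be applied with the varying argument $\EXT^{C,q}(M,N)$; both are routine.
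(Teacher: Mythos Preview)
Your proposal is correct and follows exactly the paper's approach: the paper derives the corollary by taking $L=k$ in \thref{2.6} and invoking the natural isomorphism $(-)^{{\rm co}H}\cong \Hom^H(k,-)$, whence $R^p(-)^{{\rm co}H}\cong \Ext^{H,p}(k,-)$. Your additional explicit check that $k\ot M\cong M$ in $\Mm^{C\smashco H}$ is an entirely routine point that the paper leaves implicit.
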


\begin{corollary}\colabel{2.8}
$M,N\in \Mm^{C\smashco H}$ and $L\in \Mm^H$. If $H$ is cosemisimple, then
we have
isomorphisms of vector spaces
\begin{eqnarray}
\Hom^H(L, \EXT^{C,q}(M,N)) &\cong & \Ext^{C\smashco H,q}(L \otimes M,N);\eqlabel{2.8.1}\\
\EXT^{C,q}(M,N)^{{\rm co}H} &\cong& \Ext^{C\smashco H,q }(M,N),\eqlabel{2.8.3}
\end{eqnarray}
for all $q\geq 0$.
\end{corollary}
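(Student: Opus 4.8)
The plan is to obtain both isomorphisms by specializing the Grothendieck spectral sequence \equref{2.6.1} of \thref{2.6} and showing that, under the cosemisimplicity hypothesis, it degenerates. The key input is that if $H$ is cosemisimple then the functor $\Hom^H(L,-):\ \Mm^H\to \Mm_k$ is exact for \emph{every} $L\in\Mm^H$, and hence all of its higher right derived functors vanish.

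First I would note that $H$ cosemisimple implies that $\Mm^H$ is a semisimple category; concretely, every short exact sequence of right $H$-comodules splits and every object of $\Mm^H$ is injective. This is the usual strengthening of the characterization recalled in \seref{1} (exactness of $(-)^{{\rm co}H}$); alternatively, the splitting of short exact sequences in $\Mm^H$ already shows directly that $\Hom^H(L,-)$ is exact. Either way, $\Ext^{H,p}(L,X)=0$ for every $p\geq 1$ and all $L,X\in\Mm^H$; indeed one may compute $\Ext^{H,\bullet}(L,X)$ from the trivial injective resolution $0\to X\to X\to 0$.

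Next I would feed this into \thref{2.6}. The $E_2$-term of \equref{2.6.1} is $E_2^{p,q}=\Ext^{H,p}(L,\EXT^{C,q}(M,N))$, which vanishes for $p\geq 1$ by the previous step, so the spectral sequence is concentrated in the single column $p=0$. A first-quadrant spectral sequence supported on one column has all differentials equal to zero and a trivial abutment filtration, so it degenerates at $E_2$ and gives $\Ext^{C\smashco H,q}(L\ot M,N)\cong E_2^{0,q}=\Hom^H(L,\EXT^{C,q}(M,N))$; this is \equref{2.8.1}.

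Finally, \equref{2.8.3} is the case $L=k$ (with trivial $H$-coaction) of \equref{2.8.1}: using \leref{1.4} one checks that $k\ot M\cong M$ in $\Mm^{C\smashco H}$, and $\Hom^H(k,-)=(-)^{{\rm co}H}$ by \seref{1}, so \equref{2.8.1} reads $\EXT^{C,q}(M,N)^{{\rm co}H}\cong\Ext^{C\smashco H,q}(M,N)$. Equivalently, \equref{2.8.3} follows at once from \coref{2.7}, whose spectral sequence collapses for the same reason, since $R^p(-)^{{\rm co}H}\cong\Ext^{H,p}(k,-)=0$ for $p\geq 1$. I do not anticipate a real obstacle here: the only step needing attention is the passage from exactness of $(-)^{{\rm co}H}$ to exactness of all the functors $\Hom^H(L,-)$ — i.e.\ to semisimplicity of $\Mm^H$ — and once that is in hand the collapse of the spectral sequence is formal.
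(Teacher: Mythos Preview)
Your proposal is correct and is precisely the paper's approach: the paper's proof simply notes that $\Mm^H$ is completely reducible when $H$ is cosemisimple, so the spectral sequence \equref{2.6.1} collapses to give \equref{2.8.1}, and then takes $L=k$ to obtain \equref{2.8.3}. Your write-up just fills in the details of this collapse.
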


\begin{proof}
The category ${\mathcal M}^H$ is completely  reducible if $H$ is cosemisimple,
and then the spectral sequence \equref{2.6.1} collapses, yielding 
\equref{2.8.1}.
\equref{2.8.3} follows after we take $L=k$ in \equref{2.8.1}.
\end{proof}

It follows from \leref{2.5} that we can replace $\EXT^C$ by $\Ext^C$
in \thref{2.6} and Corollaries \ref{co:2.7} and \ref{co:2.8}, in the situation where $M$ or $H$ is finite dimensional.

\begin{corollary} \colabel{2.9}
Assume that $H$ is cosemisimple and take $M\in {\mathcal M}^{C\smashco H}$.
\begin{enumerate}
\item If $M$ is projective as a $C$-comodule and $M$ or $H$ is finite dimensional, then 
$L \otimes M$ is projective in  $\Mm^{C\smashco H}$, for every $L\in \Mm^H$.
\item If $H$ is finite dimensional, then $M$ is injective in $\Mm^C$ if and only if
$M$ is injective in $\Mm^{C\smashco H}$.
\end{enumerate}
\end{corollary}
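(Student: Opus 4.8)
Both parts are quick consequences of the machinery already developed, so the plan is essentially bookkeeping; I treat the two items separately.

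For (1), the plan is to invoke \coref{2.8} together with the projectivity hypothesis. By \leref{2.5}, since $M$ or $H$ is finite dimensional, $\EXT^C$ may be replaced by $\Ext^C$ in \equref{2.8.1}, so that for every $N\in \Mm^{C\smashco H}$ and every $q\geq 0$ one has $\Hom^H(L,\Ext^{C,q}(M,N))\cong \Ext^{C\smashco H,q}(L\ot M,N)$. As $M$ is projective in $\Mm^C$, the right derived functors $\Ext^{C,q}(M,-)$ vanish for $q\geq 1$; hence $\Ext^{C\smashco H,q}(L\ot M,N)=0$ for all $q\geq 1$ and all $N$. In particular $\Ext^{C\smashco H,1}(L\ot M,-)=0$, which in the Grothendieck category $\Mm^{C\smashco H}$ is equivalent to exactness of $\Hom^{C\smashco H}(L\ot M,-)$, i.e.\ to $L\ot M$ being projective in $\Mm^{C\smashco H}$, for every $L\in\Mm^H$.

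For (2), one implication is free: if $M$ is injective in $\Mm^{C\smashco H}$ then $M$ is injective in $\Mm^C$ by \leref{2.4}(1), which needs no hypothesis on $H$. For the converse I would work directly with the functor $\Hom^{C\smashco H}(-,M)$. Starting from a short exact sequence $0\to N_1\to N_2\to N_3\to 0$ in $\Mm^{C\smashco H}$, regarded a fortiori in $\Mm^C$, injectivity of $M$ in $\Mm^C$ shows that applying $\Hom^C(-,M)$ yields a short exact sequence of vector spaces. Since $H$ is finite dimensional, $\Hom^C(-,M)=\HOM^C(-,M)$, so by \leref{2.1} this is a sequence of right $H$-comodules and $H$-colinear maps; being exact on underlying vector spaces, it is exact in $\Mm^H$ (the forgetful functor $\Mm^H\to\Mm_k$ reflects exactness). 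Applying $(-)^{{\rm co}H}$, which is exact because $H$ is cosemisimple, and using \prref{1.1} (namely $\Hom^{C\smashco H}(X,M)=\HOM^C(X,M)^{{\rm co}H}$), one obtains a short exact sequence $0\to \Hom^{C\smashco H}(N_3,M)\to \Hom^{C\smashco H}(N_2,M)\to \Hom^{C\smashco H}(N_1,M)\to 0$. Hence $\Hom^{C\smashco H}(-,M)$ is exact and $M$ is injective in $\Mm^{C\smashco H}$.

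I do not anticipate a genuine obstacle. The one point to watch in (2) is that both standing assumptions on $H$ are really used: finite dimensionality, so that every $C$-colinear map into $M$ is automatically rational and $\HOM^C(-,M)=\Hom^C(-,M)$; and cosemisimplicity (part of the blanket hypothesis of the corollary), so that $(-)^{{\rm co}H}$ is exact. One must also apply the functors in the correct order — first $\Hom^C(-,M)$, then $(-)^{{\rm co}H}$ — with the intermediate exactness taken in $\Mm^H$, not merely in $\Mm_k$.
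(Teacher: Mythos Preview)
Your argument for (1) matches the paper's proof exactly: invoke \leref{2.5} to identify $\EXT^{C,q}(M,-)$ with $\Ext^{C,q}(M,-)$, use projectivity of $M$ in $\Mm^C$ to kill these for $q\geq 1$, and read off the vanishing of $\Ext^{C\smashco H,q}(L\ot M,-)$ from \equref{2.8.1}.

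For (2) your proof is correct but takes a different route from the paper. The paper stays in the derived-functor framework: it applies \equref{2.8.3} (i.e.\ \equref{2.8.1} with $L=k$) with the roles of the two comodule variables interchanged, together with \leref{2.5} to pass from $\EXT^C$ to $\Ext^C$, obtaining $\Ext^{C\smashco H,1}(N,M)\cong \Ext^{C,1}(N,M)^{{\rm co}H}=0$ for all $N$. You instead work directly at the level of $\Hom$: apply $\Hom^C(-,M)$ to a short exact sequence (exact since $M$ is $C$-injective), identify this with $\HOM^C(-,M)$ via finite dimensionality of $H$ to land in $\Mm^H$, then apply the exact coinvariants functor and \prref{1.1}. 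This is a more elementary argument that bypasses the spectral sequence collapse entirely; the paper's version is shorter and more in keeping with the surrounding machinery, but yours has the virtue of making the two uses of the hypotheses on $H$ (finite dimensionality for $\HOM^C=\Hom^C$, cosemisimplicity for exactness of $(-)^{{\rm co}H}$) completely transparent.
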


\begin{proof}
(1) Since $M$ is projective in $\Mm^C$,  $\EXT^{C,q}(M,N)=\Ext^{C,q}(M,N)=0$ for all $q>0$ and $N\in \Mm^{C\smashco H}$. It then follows from \equref{2.8.1}
that $\Ext^{C\smashco H,q}(L \otimes M,N)\cong \Hom^H(L, \EXT^{C,q}(M,N))=0$
for all $q>0$ and $N\in \Mm^{C\smashco H}$, and this implies that $L \otimes M$ is
projective as a right $C\smashco H$-comodule.\\
(2) If $M$ is injective in $\Mm^C$, then $\Ext^{C,1}(N,M)=0$ for all $N\in {\mathcal M}^{C\smashco H}$,
and it follows from \equref{2.8.1} (with $L=k$) that $\Ext^{C\smashco H,1}(N,M)=0$, for 
all $N\in {\mathcal M}^{C\smashco H}$, and $M$ is injective in $\Mm^{C\smashco H}$
The converse implication follows from \leref{2.4}.
\end{proof}

\section{Further applications}\selabel{3}

\subsection{Cosemisimplicity of the smash coproduct}
\prref{3.2} is well-known, see \cite[Cor. 5.3]{CaeDasRai}. We present a completely different proof.

\begin{proposition}\prlabel{3.2}
If $C$ and $H$ are cosemisimple, then every $M\in {\mathcal M}^{C\smashco H}$
is a direct sum in ${\mathcal M}^{C\smashco H}$ of a family of simple ${\mathcal M}^{C\smashco H}$-subcomodules. Hence every object of $\Mm^{C\smashco H}$ is semisimple, that is, 
$\Mm^{C\smashco H}$ is a semisimple category.
\end{proposition}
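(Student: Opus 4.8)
The plan is to derive the Proposition from \leref{2.4}(2), after reducing to finite-dimensional comodules. First I would reduce to showing that every finite-dimensional $N\in\Mm^{C\smashco H}$ is projective in $\Mm^{C\smashco H}$. Granting this, an induction on $\dim N$ shows that every finite-dimensional $N$ is a direct sum of simple subcomodules: pick a nonzero subcomodule $S\subseteq N$ of minimal dimension, which is necessarily simple; the short exact sequence $0\to S\to N\to N/S\to 0$ splits because $N/S$ is finite-dimensional, hence projective, so $N\cong S\oplus N/S$, and the induction hypothesis applies to $N/S$. By the fundamental theorem of comodules, an arbitrary $M\in\Mm^{C\smashco H}$ is the sum of its finite-dimensional subcomodules; each of these is semisimple by the previous sentence, so $M$ is a sum of simple subcomodules, and therefore a direct sum of a subfamily of them by the standard argument valid in any Grothendieck category. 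This is exactly the assertion of the Proposition.

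Next I would establish the finite-dimensional claim. Let $N\in\Mm^{C\smashco H}$ be finite-dimensional. Through the isomorphism $\Mm^{C\smashco H}\cong\Mm^{C,H}$, the comodule $N$ is in particular a finite-dimensional right $C$-comodule. Since $C$ is cosemisimple, $\Mm^C$ is a semisimple category, so $N$ is projective in $\Mm^C$. As $H$ is cosemisimple, $N$ is finite-dimensional, and $N$ is projective as a right $C$-comodule, \leref{2.4}(2) applies and gives that $N$ is projective in $\Mm^{C\smashco H}$, as required.

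I do not expect a serious obstacle here; the delicate points are the passage from ``all finite-dimensional comodules are projective'' to ``$\Mm^{C\smashco H}$ is semisimple'' --- which rests on the fundamental theorem of comodules together with the standard fact that a sum of simple subobjects is a direct sum of a subfamily --- and the verification that cosemisimplicity of $C$ indeed makes every finite-dimensional $C$-comodule projective, which is what feeds \leref{2.4}(2). The point of this proof, compared with \cite[Cor. 5.3]{CaeDasRai}, is that semisimplicity of $C\smashco H$ is obtained purely from the homological machinery of \seref{2} (ultimately from \leref{2.4}), rather than by a direct computation with coalgebras.
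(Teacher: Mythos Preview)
Your proposal is correct and follows essentially the same approach as the paper: both use \leref{2.4}(2) (cosemisimplicity of $C$ making finite-dimensional comodules projective in $\Mm^C$, and cosemisimplicity of $H$) to conclude that every finite-dimensional $C\smashco H$-comodule is projective, then split short exact sequences to obtain semisimplicity in the finite-dimensional case, and finally invoke the fundamental theorem of comodules for arbitrary $M$. The only cosmetic differences are that the paper phrases the finite-dimensional step as ``every subcomodule is a direct summand'' rather than an induction on dimension, and passes from ``every element lies in a sum of simples'' to the conclusion directly rather than via the ``sum of simples is a direct sum of a subfamily'' formulation---but these are equivalent standard arguments.
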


\begin{proof}
Assume first that $M$ is finite dimensional. It follows from \leref{2.4}(2) or \coref{2.9}(2)
(with $L=k$) that $M$ is projective in $\Mm^{C\smashco H}$. Let $N$ be
a $C \smashco H$-subcomodule $N$ of $M$. $M/N\in \Mm^{C\smashco H}$ is finite
dimensional, and therefore projective, so that the short exact sequence
$$0\rightarrow N\rightarrow M \rightarrow M/N\rightarrow 0$$
splits in $\Mm^{C\smashco H}$. This means that every $C \smashco H$-subcomodule
of $M$ is a direct summand, and $M$ is the direct sum of a finite family of
simple $C \smashco H$-subcomodules.\\
Now assume that the dimension of $M$ is possibly infinite. By the Fundamental Theorem
for comodules, every $m\in M$ is contained in a finite dimensional 
$C \smashco H$-subcomodule $V_m$ of $M$, see for example \cite[5.1.1]{Montg}
$V_m$ is the direct sum of a finite
number of simple $C \smashco H$-subcomodules of $V_m$ and, a fortiori, of $M$.
Thus every $m\in M$ is contained in a direct sum of simple $C \smashco H$-subcomodules
of $M$, and we can easily conclude that $M$ is a direct sum of a family of
simple $C \smashco H$-subcomodules.
\end{proof}

\subsection{The smash coproduct $B \bowtie H$}
Assume that $H$ is cosemisimple, and let $\phi$ be a left integral in $H^*$. Then we
have the trace map
$$\Psi:\ C\to C^{{\rm co}H},~~\Psi(c)=\phi(c_{[1]})c_{[0]}.$$
It is known that $B=C^{{\rm co}H}$ is a coalgebra with comultiplication
$$\Delta'(c) =\Psi(c_1) \otimes c_2  = c_1 \otimes \Psi(c_2) = \Psi(c_1) \otimes \Psi(c_2),$$
for all $c\in C^{{\rm co}H}$, and that $\Psi$ is a coalgebra map, see \cite[Cor. 2.4]{CaeDasRai}.
For a right $C\smashco H$-comodule $M$, we have a $k$-linear map
$$\Psi_M:\ M\to M^{{\rm co}H},~~\Psi_M(m)=\phi(m_{[1]})m_{[0]}.$$
Observe that $\Psi_C=\Psi$. $M^{{\rm co}H}$ is a right $B$-comodule, with coaction
\begin{eqnarray*}
\rho'(m)&=& m_{(0)}\ot m_{(1)}= \Psi_M(m)(m_{\{0\}}) \ot m_{\{1\}}\\
&=&m_{\{0\}} \ot \Psi(m_{\{1\}})
=\Psi_M(m)(m_{\{0\}}) \ot \Psi(m_{\{1\}}),
\end{eqnarray*}
for all $m\in M^{{\rm co}H}$. A morphism $f:\ M\to N$ in $\Mm^{C\smashco H}$
restricts and corestricts to a right $B$-colinear $f^{{\rm co}H}:\ M^{{\rm co}H}\to N^{{\rm co}H}$,
so that we have a functor $(-)^{{\rm co}H}:\ \Mm^{C\smashco H}\to \Mm^B$.\\
Now we consider $B$ as a right $H$-comodule algebra, under trivial $H$-coaction
$\rho(b)=b\ot 1$, for all $b\in B$. Then we can consider the smash coproduct $B\smashco H$,
and our previous results remain valid if we replace $C$ by $B$.

\begin{theorem}\thlabel{3.3}
Assume that $H$ is cosemisimple and take
$M \in \Mm^{B}$ and $N \in \Mm^{B \smashco H}$. $M$ is viewed as a right
 $B \smashco H$-comodule under the trivial $H$-coaction. Then
\begin{equation}\eqlabel{3.3.1}
\Ext^{B\smashco H}(M,N)^{{\rm co}H}=\Ext^B(M,N^{{\rm co}H}).
\end{equation}
\end{theorem}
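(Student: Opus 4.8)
The plan is to exhibit $\Hom^{B\smashco H}(M,-)$ as the composite of the coinvariants functor $(-)^{{\rm co}H}\colon\Mm^{B\smashco H}\to\Mm^B$ (the one constructed just before the theorem, now with $C$ replaced by $B$) followed by $\Hom^B(M,-)\colon\Mm^B\to\Mm_k$, and to derive this composite using that the inner functor is \emph{exact} because $H$ is cosemisimple.

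First I would record the degree-zero identification $\Hom^{B\smashco H}(M,N)=\Hom^B(M,N^{{\rm co}H})$. Introduce $T\colon\Mm^B\to\Mm^{B\smashco H}$, the functor equipping a right $B$-comodule with the trivial $H$-coaction; compatibility \equref{star} holds automatically since $B$ itself carries the trivial $H$-coaction, and $T$ is exact, changing neither underlying spaces nor maps. For $L\in\Mm^B$ and $N\in\Mm^{B\smashco H}$, a morphism $TL\to N$ in $\Mm^{B\smashco H}$ is a $B$-colinear, $H$-colinear map $f\colon L\to N$; as $L$ has trivial $H$-coaction, $H$-colinearity of $f$ amounts to $f(L)\subseteq N^{{\rm co}H}$, and for such $f$ being $B$-colinear into $N$ is the same as being $B$-colinear into the $B$-subcomodule $N^{{\rm co}H}$. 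Hence $T$ is left adjoint to $(-)^{{\rm co}H}$, with $\Hom^{B\smashco H}(TL,N)\cong\Hom^B(L,N^{{\rm co}H})$ naturally in $L$ and $N$; taking $L=M$ gives the identification, consistent with \prref{1.1} (so in particular $\HOM^B(M,N)^{{\rm co}H}=\Hom^B(M,N^{{\rm co}H})$).

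Next I would note two properties of $(-)^{{\rm co}H}\colon\Mm^{B\smashco H}\to\Mm^B$: it preserves injective objects, since its left adjoint $T$ is exact; and it is itself exact, since on underlying vector spaces $N^{{\rm co}H}=\Hom^H(k,N)$ is exact in $N$ ($H$ being cosemisimple). Then I would pick an injective resolution $\Ee^\star$ of $N$ in $\Mm^{B\smashco H}$: by the two properties, $(\Ee^\star)^{{\rm co}H}$ is a complex of injective objects of $\Mm^B$, acyclic except in degree $0$ where its cohomology is $N^{{\rm co}H}$, hence an injective resolution of $N^{{\rm co}H}$ in $\Mm^B$. Applying $\Hom^B(M,-)$ and taking cohomology computes $\Ext^{B,\bullet}(M,N^{{\rm co}H})$; on the other hand, by \prref{1.1} (with $C=B$) and the degree-zero identification, $\Hom^B(M,(\Ee^q)^{{\rm co}H})=\HOM^B(M,\Ee^q)^{{\rm co}H}=\Hom^{B\smashco H}(M,\Ee^q)$, so the very same complex computes $\Ext^{B\smashco H,\bullet}(M,N)$, which equals $\EXT^{B,\bullet}(M,N)^{{\rm co}H}$ as in \coref{2.8} ($(-)^{{\rm co}H}$ being exact on $\Mm^H$ too). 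Comparing the two computations yields \equref{3.3.1}. Equivalently, one may deduce the same from the collapse at $E_2$ of the Grothendieck spectral sequence of $\Hom^B(M,-)\circ(-)^{{\rm co}H}=\Hom^{B\smashco H}(M,-)$, which holds because the inner functor is exact.

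The only step that is not purely formal is that $(-)^{{\rm co}H}$ preserves injectives; the rest reduces to the exactness and well-definedness of $T$ and to the results of \seref{2} read with $C$ replaced by $B$. In particular no finiteness hypothesis on $M$ or $H$ is needed.
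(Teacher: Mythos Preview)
Your argument is correct. You and the paper assemble the same ingredients---the degree-zero identification $\Hom^{B\smashco H}(M,N)\cong\Hom^B(M,N^{{\rm co}H})$, the exactness of $(-)^{{\rm co}H}$ coming from cosemisimplicity of $H$, and the fact that $(-)^{{\rm co}H}$ sends injective $B\smashco H$-comodules to injective $B$-comodules---but you organize them differently. You package the first and third as a single adjunction $T\dashv(-)^{{\rm co}H}$ with exact left adjoint, and then finish by the direct computation: push an injective resolution $\Ee^\star$ of $N$ through the exact, injective-preserving functor $(-)^{{\rm co}H}$ to obtain an injective resolution of $N^{{\rm co}H}$, and identify the two $\Hom$-complexes termwise via the adjunction. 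The paper instead verifies the degree-zero identity and injective preservation by hand (which is really your adjunction unwound), and then invokes Grothendieck's uniqueness theorem for cohomological $\delta$-functors to conclude that $\EXT^B(M,-)^{{\rm co}H}$ and $\Ext^B(M,(-)^{{\rm co}H})$ agree, before appealing to \coref{2.8}. Your route is more elementary and self-contained; the paper's route has the advantage of fitting the result explicitly into the $\delta$-functor formalism used throughout \seref{2}. Either way, the outcome is $\Ext^{B\smashco H,\bullet}(M,N)\cong\EXT^{B,\bullet}(M,N)^{{\rm co}H}\cong\Ext^{B,\bullet}(M,N^{{\rm co}H})$, as you observe.
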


\begin{proof}
We will first show that
\begin{equation}\eqlabel{3.3.2}
\Hom^{B\smashco H}(M,N)=\HOM^{B}(M,N)^{{\rm co}H}=\Hom^B(M,N^{{\rm co}H}).
\end{equation}
The first equality follows from \prref{1.1}.
Take $f\in \HOM^{B}(M,N)^{{\rm co}H}$. Then $\rho(f)=f\ot 1$, and \equref{1.2}
takes the form $f(m)\ot 1= \rho(f(m))$, for all $m\in M$, hence $f(m)\in N^{{\rm co}H}$,
and it follows that $f\in \Hom^B(M,N^{{\rm co}H})$.\\
Conversely, let $f\in \Hom^B(M,N^{{\rm co}H})$, and view $f$ as a map $M\to N$.
For all $m\in M$, $\rho(f(m))=f(m)\ot 1$, and we deduce from \equref{1.2}
that $f$ is $H$-rational and $\rho(f)=f\ot 1$, hence $f\in \HOM^{B}(M,N)^{{\rm co}H}$.
This proves the second equality.\\
Let $I$ be an injective object in $\Mm^{B\smashco H}$. In view of \equref{2.8.1},
$\EXT^{B,q}(M,I)^{{\rm co}H}\cong \Ext^{B\smashco H,q}(M, I)=0$.\\
We claim that $I^{{\rm co}H}$ is injective
in $\Mm^B$. Take an exact sequence $0\to M_3\to M_2\to M_1\to 0$ in $\Mm^B$. Viewing
the $M_i$ as $B\smashco H$-comodules via the trivial $H$-coaction, this sequence is
also exact in $\Mm^{B\smashco H}$. From the injectivity of $I\in \Mm^{B\smashco H}$,
it follows that
$$0 \rightarrow \Hom^{B \smashco H}(M_1,I)  \rightarrow \Hom^{B \smashco H}(M_2,I)  \rightarrow \Hom^{B \smashco H}(M_3,I)  \rightarrow 0$$
is exact in $\Mm_k$. In view of \equref{3.3.2}, this sequence is isomorphic to
$$0 \rightarrow \HOM^{B}(M_1,I^{{\rm co}H}) \rightarrow  \HOM^{B}(M_2,I^{{\rm co}H})  \rightarrow  \HOM^{B}(M_3,I^{{\rm co}H})\rightarrow 0.$$
It follows that  $\Ext^{B,q}(M,I^{{\rm co}H})= 0$, for all $q>0$.
Observe that functors
$$\EXT^{B}(M,-)^{{\rm co}H},~\Ext^B(M,(-)^{{\rm co}H}):\ 
\Mm^{B\smashco H}\to \Mm_k.$$
are  cohomological in the sense of \cite[2.2]{Groth}. Indeed, right derived functors are
cohomological, the coinvariants functor is exact since $H$ is cosemisimple, and it is obvious
that the composition of a cohomological functor and an exact functor is cohomological.
The two functors coincide in degree 0 \equref{3.3.2} and vanish in degree $>0$ at injective objects of 
$\Mm^{B\smashco H}$. Therefore
$ \Ext^B(M,(-)^{{\rm co}H})\cong \EXT^{B}(M,-)^{{\rm co}H}$, which is isomorphic to
$\Ext^{B\smashco H}(M, -)$ by \equref{2.8.1}.
\end{proof}

\subsection{Hochschild-Serre spectral sequence for smash coproducts}
Assume that $C$ contains a $H$-coinvariant grouplike element $x$.
This is the case for the examples (3) and (4) mentioned in the introduction.
If $H$ is cosemisimple, then $x$ is also a grouplike element of the coalgebra
$B=C^{{\rm co}H}$ discussed in the previous subsection. For a right $C$-comodule
$N$, we can consider the subspace of $C$-coinvariant elements
$$N^{{\rm co}C}=\{n\in N~|~n_{\{0\}}\ot n_{\{1\}}=n\ot x\}.$$

\begin{lemma}\lelabel{3.4}
If $N\in \Mm^{C\smashco H}$, then $N^{{\rm co}C}\in \Mm^H$.
\end{lemma}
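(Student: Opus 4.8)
The plan is to prove that $N^{{\rm co}C}$ is stable under the right $H$-coaction $\rho^H$ of $N$. Together with the trivial observation that $N^{{\rm co}C}$ is a $k$-subspace of $N$ (being the equalizer of the two $k$-linear maps $\rho^C$ and $n\mapsto n\ot x$ from $N$ to $N\ot C$), this will immediately yield that $\rho^H$ corestricts to a coassociative, counital coaction $N^{{\rm co}C}\to N^{{\rm co}C}\ot H$, i.e. $N^{{\rm co}C}\in \Mm^H$; coassociativity and counitality are inherited from $\rho^H$ for free.

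Concretely, first I would fix $n\in N^{{\rm co}C}$, so that $\rho^C(n)=n_{\{0\}}\ot n_{\{1\}}=n\ot x$, and apply the compatibility relation \equref{star} for the right $(C,H)$-comodule $N$, namely $\rho^C(n_{[0]})\ot n_{[1]}=n_{\{0\}[0]}\ot n_{\{1\}[0]}\ot n_{\{0\}[1]}n_{\{1\}[1]}$. Substituting $n_{\{0\}}\ot n_{\{1\}}=n\ot x$ into the right-hand side and using that $x$ is $H$-coinvariant (so its $H$-coaction is $x\ot 1$), the right-hand side collapses to $n_{[0]}\ot x\ot n_{[1]}$. Thus I would obtain the identity $\rho^C(n_{[0]})\ot n_{[1]}=n_{[0]}\ot x\ot n_{[1]}$ in $N\ot C\ot H$.

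To extract the conclusion from this identity I would use the standard basis device already employed in the proof of \prref{1.1}: fix a $k$-basis $\{h_i\mid i\in I\}$ of $H$ and write $\rho^H(n)=\sum_i n_i\ot h_i$, with only finitely many $n_i\neq 0$. The displayed identity then reads $\sum_i\rho^C(n_i)\ot h_i=\sum_i(n_i\ot x)\ot h_i$ inside $\bigoplus_{i\in I}(N\ot C)\ot kh_i$, and projecting onto the summand indexed by a fixed $l\in I$ gives $\rho^C(n_l)=n_l\ot x$, i.e. $n_l\in N^{{\rm co}C}$. Hence $\rho^H(n)=\sum_i n_i\ot h_i\in N^{{\rm co}C}\ot H$, which is exactly the stability we wanted.

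I do not anticipate a genuine obstacle: the argument is a short Sweedler-notation computation followed by the routine basis trick. The only point deserving a little care is keeping track of the order of the legs when substituting $\rho^C(n)=n\ot x$ into \equref{star} and simplifying via the $H$-coinvariance (hence also cocommutativity-free triviality) of the $H$-coaction on $x$; grouplikeness of $x$ is not actually needed for this particular statement, only $H$-coinvariance.
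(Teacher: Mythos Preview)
Your proof is correct and is essentially identical to the paper's own argument: both apply the compatibility relation \equref{star} to $n\in N^{{\rm co}C}$, use the $H$-coinvariance of $x$ to simplify the right-hand side to $n_{[0]}\ot x\ot n_{[1]}$, and then extract $\rho^C(n_i)=n_i\ot x$ for each component via a fixed basis of $H$. Your closing remark that only the $H$-coinvariance of $x$ (not its grouplikeness) is used in this particular lemma is accurate.
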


\begin{proof}
Fix a basis $\{h_i~|~i\in I\}$ of $H$. Take $n\in N^{{\rm co}C}$ and write $\rho^H(n)=
n_{[0]}\ot n_{[1]}= \sum_i n_i\ot h_i$. Then
\begin{eqnarray*}
&&\hspace*{-2cm}
\sum_i \rho^C(n_i)\ot h_i=\rho^C(n_{[0]})\ot n_{[1]}\equal{\equref{star}}
n_{\{0\}[0]} \otimes n_{\{1\}[0]} \otimes n_{\{0\}[1]}n_{\{1\}[1]}\\
&=& n_{[0]} \otimes x_{[0]} \otimes n_{[1]}x_{[1]}
=n_{[0]} \otimes x \otimes n_{[1]}= \sum_i n_i\otimes x\ot h_i
\end{eqnarray*}
It follows that each $n_i$ is $C$-coinvariant, and $\rho(n)\in N^{{\rm co}C}\ot H$.
\end{proof}

Observe that $x\smashco 1_H\in G(C\smashco H)$. $k$ is a right $C\smashco H$-comodule
with coaction $\rho(\lambda)=\lambda x\smashco 1_H$, and
$N^{{\rm co}C}\cong \Hom^C(k,N)$. Consequently we have isomorphisms of derived
functors $\Ext^C(k,-)\cong R^p(-)^{{\rm co}C}$. Taking $M=k$ in \coref{2.7}, we
obtain the following spectral sequence.

\begin{theorem}\thlabel{3.5}
Assume that $C$ contains a $H$-coinvariant grouplike element $x$.
For $N\in \Mm^{C\smashco H}$, we have a spectral sequence
$$R^p(R^q(N)^{{\rm co}C})^{{\rm co}H} \Rightarrow R^{p+q}(N)^{{\rm co}C \smashco H}.$$
\end{theorem}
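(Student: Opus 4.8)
The plan is to recognize \thref{3.5} as the special case $M = k$ of \coref{2.7}, once the functors appearing there are reinterpreted via the grouplike element $x \smashco 1_H \in G(C \smashco H)$. First I would record, as is done just before the statement, that $k$ carries the $C \smashco H$-comodule structure $\rho(\lambda) = \lambda(x \smashco 1_H)$; this is a genuine $C \smashco H$-comodule because $x \smashco 1_H$ is grouplike, which in turn follows from $x$ being grouplike in $C$ and $H$-coinvariant (so that $\Delta(x \smashco 1_H) = (x_1 \smashco x_{2[1]} 1) \otimes (x_{2[0]} \smashco 1) = (x \smashco 1) \otimes (x \smashco 1)$ by \equref{1.3} and $\rho^C(x) = x \otimes 1_H$). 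With this structure, $\Hom^{C\smashco H}(k, N) \cong N^{{\rm co}(C\smashco H)}$, and likewise $\HOM^C(k, N) \cong N^{{\rm co}C}$ as $H$-comodules — the latter identification is exactly the content of \leref{3.4}, which guarantees $N^{{\rm co}C}$ is an $H$-subcomodule of $N$. Hence the functor $\HOM^C(k, -) : \Mm^{C\smashco H} \to \Mm^H$ is naturally isomorphic to $(-)^{{\rm co}C}$, and its right derived functors $\EXT^{C,q}(k, -)$ are naturally isomorphic to $R^q(-)^{{\rm co}C}$.

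Next I would feed these identifications into \coref{2.7}, which for any $M, N \in \Mm^{C\smashco H}$ gives a spectral sequence $R^p\bigl(\EXT^{C,q}(M,N)\bigr)^{{\rm co}H} \Rightarrow \Ext^{C\smashco H, p+q}(M,N)$. Setting $M = k$: the $E_2$-term becomes $R^p\bigl(\EXT^{C,q}(k,N)\bigr)^{{\rm co}H} \cong R^p\bigl(R^q(N)^{{\rm co}C}\bigr)^{{\rm co}H}$ by the previous paragraph, and the abutment becomes $\Ext^{C\smashco H,p+q}(k,N)$. It remains to identify this abutment with $R^{p+q}(N)^{{\rm co}(C\smashco H)}$; but $\Ext^{C\smashco H}(k,-)$ is by definition the right derived functor of $\Hom^{C\smashco H}(k,-) \cong (-)^{{\rm co}(C\smashco H)}$, so $\Ext^{C\smashco H,n}(k,N) \cong R^n(N)^{{\rm co}(C\smashco H)}$. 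Substituting yields precisely the asserted convergence $R^p\bigl(R^q(N)^{{\rm co}C}\bigr)^{{\rm co}H} \Rightarrow R^{p+q}(N)^{{\rm co}C\smashco H}$.

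The only genuine content beyond bookkeeping is the verification that $x \smashco 1_H$ is grouplike in $C \smashco H$ and that the resulting comodule structures on $k$ are compatible across the functors $(-)^{{\rm co}C}$, $(-)^{{\rm co}H}$, and $(-)^{{\rm co}(C\smashco H)}$ — equivalently, that the coinvariants of $N$ with respect to $x \smashco 1_H$ agree with the iterated coinvariants $\bigl(N^{{\rm co}C}\bigr)^{{\rm co}H}$ at the level of degree-zero functors, which is immediate once one unwinds the definitions and uses $\rho^C(x) = x \otimes 1_H$. I expect the main (mild) obstacle to be nothing more than carefully matching the Sweedler indices in \equref{1.3} to confirm grouplikeness; everything else is a formal consequence of \coref{2.7} and \leref{3.4} together with the standard fact that $R^\bullet$ commutes with these natural isomorphisms of left exact functors.
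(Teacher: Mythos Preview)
Your proposal is correct and follows exactly the paper's approach: the paper's proof is literally ``Taking $M=k$ in \coref{2.7}'', with the identifications $\Hom^C(k,N)\cong N^{{\rm co}C}$ and $\Ext^{C\smashco H}(k,-)\cong R^\bullet(-)^{{\rm co}C\smashco H}$ recorded just before the statement, exactly as you outline. One small notational slip: where you write $\rho^C(x)=x\ot 1_H$ you mean the $H$-coaction on $C$ (i.e.\ $x_{[0]}\ot x_{[1]}=x\ot 1_H$), not a $C$-coaction.
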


Applying \thref{3.5} to Examples (3) and (4) from the introduction, we obtain the following results.

\begin{corollary}\colabel{3.6}
Let $A$ be a Hopf algebra, and let $G$ be a finite group acting as a group of Hopf algebra
automorphisms on $A$. For $N\in \Mm^{A\smashco k^G}$, we have a spectral sequence
$$R^p(R^q(N)^{{\rm co}A})^{{\rm co}k^G} \Rightarrow R^{p+q}(N)^{{\rm co}A\smashco k^G}.$$
\end{corollary}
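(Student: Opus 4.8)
The plan is to realize \coref{3.6} as the special case of \thref{3.5} obtained by taking $H=k^G$ and $C=A$, so the only thing to check is that the hypotheses of \thref{3.5} are met in this situation. First I would recall that when $G$ is a finite group acting by Hopf algebra automorphisms on $A$, the dual $k^G$ of the group algebra $kG$ is a finite-dimensional (hence, over a field, automatically cosemisimple, and certainly cosemisimple as $\Char k$ is irrelevant here — but in any case we only need cosemisimplicity, which for the spectral sequence of \thref{3.5} is the standing hypothesis) Hopf algebra, and that the $G$-action makes $A$ into a right $k^G$-comodule algebra; the compatibility between $\Delta_A$, $\varepsilon_A$ and the $k^G$-coaction is exactly the statement that each element of $G$ acts as a coalgebra automorphism, so $A$ is a right $k^G$-comodule coalgebra in the sense of \equref{1.1a}. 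This is the content of \cite{MontgVegaWither}, to which the paper has already referred.

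Next I would exhibit the required $k^G$-coinvariant grouplike element of $A$: the unit $1_A$ is grouplike in the coalgebra $A$, and since the $G$-action fixes $1_A$, the corresponding $k^G$-coaction sends $1_A\mapsto 1_A\ot 1_{k^G}$, i.e.\ $1_A$ is $k^G$-coinvariant. Thus $x=1_A$ plays the role demanded in the hypothesis of \thref{3.5}. With $H=k^G$ cosemisimple, $C=A$ the $H$-comodule coalgebra, and $x=1_A$ the coinvariant grouplike element, \thref{3.5} applies verbatim and yields, for every $N\in\Mm^{A\smashco k^G}$, the spectral sequence
$$R^p\bigl(R^q(N)^{{\rm co}A}\bigr)^{{\rm co}k^G}\Rightarrow R^{p+q}(N)^{{\rm co}A\smashco k^G},$$
which is precisely the assertion of \coref{3.6}.

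There is essentially no obstacle here: the corollary is a direct instantiation, and the only points needing a word are (i) that $k^G$ is cosemisimple — immediate since it is finite-dimensional and, being the function algebra of a finite group, is visibly a product of copies of $k$, so $\Mm^{k^G}$ is completely reducible; and (ii) that the $G$-action by Hopf automorphisms is exactly the data making $A$ an $H$-comodule coalgebra and making $1_A$ a coinvariant grouplike — this is standard and is spelled out in \cite{MontgVegaWither}. Should one wish, the analogous statement for Example (4), $A(G)\cong A(H)\smashco A(K)$ with $G$ a semidirect product of affine algebraic groups, is obtained the same way, taking the $K$-coinvariant grouplike element of $A(H)$ to be the identity of the coordinate ring, and noting that $A(K)$ is cosemisimple precisely when $K$ is linearly reductive; I would state this as a parallel corollary or a remark rather than repeating the argument.
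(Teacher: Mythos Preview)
Your approach is correct and matches the paper's: \coref{3.6} is simply \thref{3.5} with $C=A$, $H=k^G$, and $x=1_A$ the coinvariant grouplike element, exactly as you say.

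Two remarks, however. First, \thref{3.5} does \emph{not} assume that $H$ is cosemisimple; its only hypothesis is the existence of an $H$-coinvariant grouplike in $C$. The sentence ``If $H$ is cosemisimple'' preceding \thref{3.5} in the text is an aside about the coalgebra $B=C^{{\rm co}H}$ and is not a standing hypothesis. So your discussion of cosemisimplicity is unnecessary, and your closing remark that \coref{3.7} would require $K$ to be linearly reductive is mistaken --- no such restriction is needed. Second, your justification that $k^G$ is cosemisimple is not right: that $k^G$ is a product of copies of $k$ concerns its \emph{algebra} structure, whereas cosemisimplicity is about the \emph{coalgebra} structure, and $\Mm^{k^G}\cong {}_{kG}\Mm$ is completely reducible only when $\Char k\nmid |G|$. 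Fortunately this is moot, since cosemisimplicity plays no role in the argument.
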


\begin{corollary}\colabel{3.7}
Assume that an affine algebraic group $G$ can be written as the semi-direct product
of two algebraic subgroups $H$ and $K$. Let $A(G)$ be the coordinate ring of $G$.
For $N\in \Mm^{A(G)}$, we have a spectral sequence
$$R^p(R^q(N)^{{\rm co}A(L)})^{{\rm co}A(K)} \Rightarrow R^{p+q}(N)^{{\rm co}A(G)}.$$
\end{corollary}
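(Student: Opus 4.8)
The plan is to deduce \coref{3.7} from \thref{3.5} by recognizing it as the special case where the comodule coalgebra is a coordinate ring. Recall from Example (4) in the introduction that if $G = H \ltimes K$ is a semi-direct product of affine algebraic subgroups, then $A(G) \cong A(L) \smashco A(K)$, where all three are commutative Hopf algebras, $A(L)$ is an $A(K)$-comodule coalgebra, and the category of rational $G$-modules coincides with $\Mm^{A(G)}$. (Here $L = H$ as a group, but we keep the notation $A(L)$ of the introduction to match the statement.) So the first step is to identify $C = A(L)$, $H = A(K)$, and $C \smashco H = A(G)$ in the setup of \thref{3.5}.

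Next I would verify the hypothesis of \thref{3.5}, namely that $C = A(L)$ contains an $A(K)$-coinvariant grouplike element. The counit $\varepsilon_{A(L)} \in A(L)$ is a grouplike element of the coalgebra $A(L)$ (it corresponds to the identity element of the group $L$, i.e. the augmentation), and it is $A(K)$-coinvariant because the $A(K)$-coaction on $A(L)$ is the comodule-coalgebra structure map, which is counital. Concretely, the semi-direct product structure makes $A(L) \to A(L) \otimes A(K)$ a morphism of coalgebras sending the distinguished grouplike to itself tensor $1$. So we take $x = \varepsilon_{A(L)}$ (equivalently, the grouplike corresponding to $1 \in L$), and the hypothesis is satisfied.

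With these identifications in place, the spectral sequence of \thref{3.5} reads
$$R^p\bigl(R^q(N)^{{\rm co}A(L)}\bigr)^{{\rm co}A(K)} \Rightarrow R^{p+q}(N)^{{\rm co}A(G)},$$
for $N \in \Mm^{A(G)}$, which is exactly the statement of \coref{3.7}. The only remaining point is to check that the functor $(-)^{{\rm co}A(L)}$ on $\Mm^{A(G)}$ (viewing an $A(G) = A(L)\smashco A(K)$-comodule as an $(A(L), A(K))$-comodule and taking $A(L)$-coinvariants in the sense preceding \thref{3.5}) agrees with the naive invariants, and likewise for $A(K)$ and $A(G)$; this is immediate from $N^{{\rm co}C} \cong \Hom^C(k,N)$ established just before \thref{3.5} together with the isomorphisms $\Ext^C(k,-) \cong R^p(-)^{{\rm co}C}$ recorded there.

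The main obstacle — really the only nontrivial input — is the identification $A(G) \cong A(L) \smashco A(K)$ together with the comodule-coalgebra structure, but this is not something to be proved here: it is classical, due to Molnar \cite{Molnar}, and is simply quoted as Example (4) of the introduction. Once that dictionary is invoked, \coref{3.7} is a direct instance of \thref{3.5}, so no further argument is needed.
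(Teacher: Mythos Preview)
Your approach is correct and matches the paper's, which gives no proof beyond the sentence ``Applying \thref{3.5} to Examples (3) and (4) from the introduction, we obtain the following results.'' One small slip: the $A(K)$-coinvariant grouplike element you want is the unit $1_{A(L)}$ of the Hopf algebra $A(L)$, not the counit $\varepsilon_{A(L)}$ (which is a map $A(L)\to k$, not an element of $A(L)$); its coinvariance follows because the coaction $A(L)\to A(L)\otimes A(K)$ is an algebra map in Molnar's setting, hence sends $1$ to $1\otimes 1$.
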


We remark that \coref{3.7} can be reformulated as follows: for a rational $G$-module $N$,
we have a spectral sequence
$$H^{p}(K, H^{q}(L,N)) \Rightarrow H^{p+q} (G,N).$$
Here $H^p(G,-)=R^p(-)^H$ is the $p$-th right derived functor of the $G$-invariants functor
$(-)^G$, from rational $G$-modules to vector spaces. Indeed, the category of rational
$G$-modules is isomorphic to the category of $A(G)$-comodules, and $G$-invariants
of a rational $G$-module are precisely the $G$-coinvariants of an $A(G)$-comodule.
For more information on this spectral sequence, see 
\cite[Theorem 2.9]{Haboush} and \cite[Lemma 1.1]{ClineParsh}.

\end{document}